\newtheorem{theorem}{Theorem}[section]
\newtheorem{definition}{Definition}[section]
\newtheorem{lemma}{Lemma}[section]
\newenvironment{proof}[1][Proof]{\begin{trivlist}
\item[\hskip \labelsep {\bfseries #1}]}{\end{trivlist}}
\begin{document}

\title{A Fast Summation Method for Oscillatory Lattice Sums}
\author{Ryan Denlinger
\thanks{Courant Institute of Mathematical Sciences,
         New York University, 251 Mercer Street,
         New York, NY 10012-1110.
{{\em email}: {\sf {ryand@cims.nyu.edu.}}}}
\and
Zydrunas Gimbutas
\thanks{National Institute of Standards and Technology,
         325 Broadway, Boulder, CO 80305.
{{\em email}: {\sf zydrunas.gimbutas@boulder.nist.gov.}}
Contributions by staff of NIST, an agency of the U.S. Government, 
are not subject to copyright within the United States.}
\and 
Leslie Greengard
\thanks{Courant Institute of Mathematical Sciences,
         New York University, 251 Mercer Street,
         New York, NY 10012-1110.
{{\em email}: {\sf {greengard@cims.nyu.edu.}}}}
\and
Vladimir Rokhlin
\thanks{Departments of Computer Science, Mathematics, and Physics,
Yale University, New Haven, CT 06511.
{{\em email}: {\sf rokhlin@cs.yale.edu.}}}
}
\date{\today}
\maketitle

\begin{abstract}
We present a fast summation method for lattice sums of the 
type which arise when solving wave scattering problems with periodic boundary
conditions. While there are a variety of effective algorithms in the 
literature for such calculations, the approach presented here is 
new and leads to a rigorous analysis of Wood's anomalies. These arise when
illuminating a grating at specific combinations of the angle of incidence 
and the frequency of the wave, for which the lattice sums diverge. 
They were discovered by Wood in 1902 as singularities in the spectral response.
The primary tools in our approach are the
Euler-Maclaurin formula and a steepest descent argument. The resulting
algorithm has super-algebraic convergence and requires only 
milliseconds of CPU time.

\end{abstract}

\section{Introduction}

A variety of problems in acoustics and electromagnetics require the 
solution of {\em quasiperiodic} scattering problems - that is,
time-harmonic wave scattering from a periodic structure with a well-defined 
unit cell
\cite{Petit1980,Wilcox}. 
For the sake of simplicity, we will focus on the scalar (acoustic) case.
In the three dimensional setting, we imagine that
an acoustic plane wave of the form 
$u^{in} = e^{i(k_1x + k_2y + k_3z)}$ 
impinges on a two-dimensional array of scatterers centered at 
$(nd_1,md_2)$ for $n,m \in \mathbb{Z}$ (Fig. \ref{scatfig}, left).
We denote the unit cell centered at the origin by 
$U = [-\frac{d_1}{2},\frac{d_1}{2}] \times [-\frac{d_2}{2}, \frac{d_2}{2}]
\times (-\infty,\infty)$
and the scatterer centered in the unit cell by $S$.
The incoming wave satisfies  the Helmholtz equation 
\begin{equation}
\Delta u + k^2 u = 0\, ,
\label{helmeq} 
\end{equation} 
where $k = \sqrt{k_1^2 + k_2^2 + k_3^2}$, and
{\em quasiperiodic} boundary conditions
on $U$, namely,
\begin{align*}
u^{in}(x+d_1,y,z) &= e^{i \alpha} u^{in}(x,y,z) \, ,\\
u^{in}(x,y+d_2,z) &= e^{i \beta} u^{in}(x,y,z) \, ,
\end{align*}
where $e^{i\alpha} = e^{ik_1d_1}$ and $e^{i \beta} = e^{ik_2d_2}$ are complex
(Bloch) phases. Note that for a normally incident wave, with 
$k_3=k$ and $k_1=k_2=0$, the boundary conditions reduce to simple
periodicity.
Assuming a ``sound-soft" obstacle $S$,
the scattered field $u^{scat}$ exterior to $S$ in the domain
$(x,y,z) \in U$ must satisfy
the Helmholtz equation (\ref{helmeq}) and the boundary conditions
\begin{align*}
u^{scat}(x+d_1,y,z) &= e^{i \alpha} u^{scat}(x,y,z) \, ,\\
u^{scat}(x,y+d_2,z) &= e^{i \beta} u^{scat}(x,y,z) \, ,\\
u^{scat}(x,y,z) &= - u^{in}(x,y,z) |_S .
\end{align*}

In the two dimensionsal case, 
the incoming acoustic plane wave takes the form 
$u^{in} = e^{i(k_1x + k_2y)} = 
e^{i(k \cos \psi x + k \sin \psi y)}$,
where $k = \sqrt{k_1^2 + k_2^2}$ and $\psi$ is the angle of incidence
of the incoming wave with respect to the $x$-axis.
The one-dimensional array of scatterers is assumed to be centered at 
$(nd,0)$ for $n \in \mathbb{Z}$ (Fig. \ref{scatfig}, right).
The unit cell will again denoted by
$U = [-\frac{d}{2},\frac{d}{2}] \times (-\infty,\infty)$
and the scatterer centered at the origin will again denoted by $S$.
The incoming wave satisfies the Helmholtz equation,
while the quasiperiodic condition
is now simply
\begin{align*}
u^{in}(x+d,y) &= e^{i \alpha} u^{in}(x,y) \, ,
\end{align*}
with Bloch phase $e^{i \alpha} = e^{ik_1d}$.
The scattered field $u^{scat}$, exterior to $S$ but within the domain
$U$, must satisfy
the Helmholtz equation (\ref{helmeq}) and the boundary conditions
\begin{align*}
u^{scat}(x+d,y) &= e^{i \alpha} u^{scat}(x,y) \, , \\
u^{scat}(x,y) &= - u^{in}(x,y) |_S .
\end{align*}

\begin{figure}[h]
\centering
\includegraphics[width=0.45\textwidth]{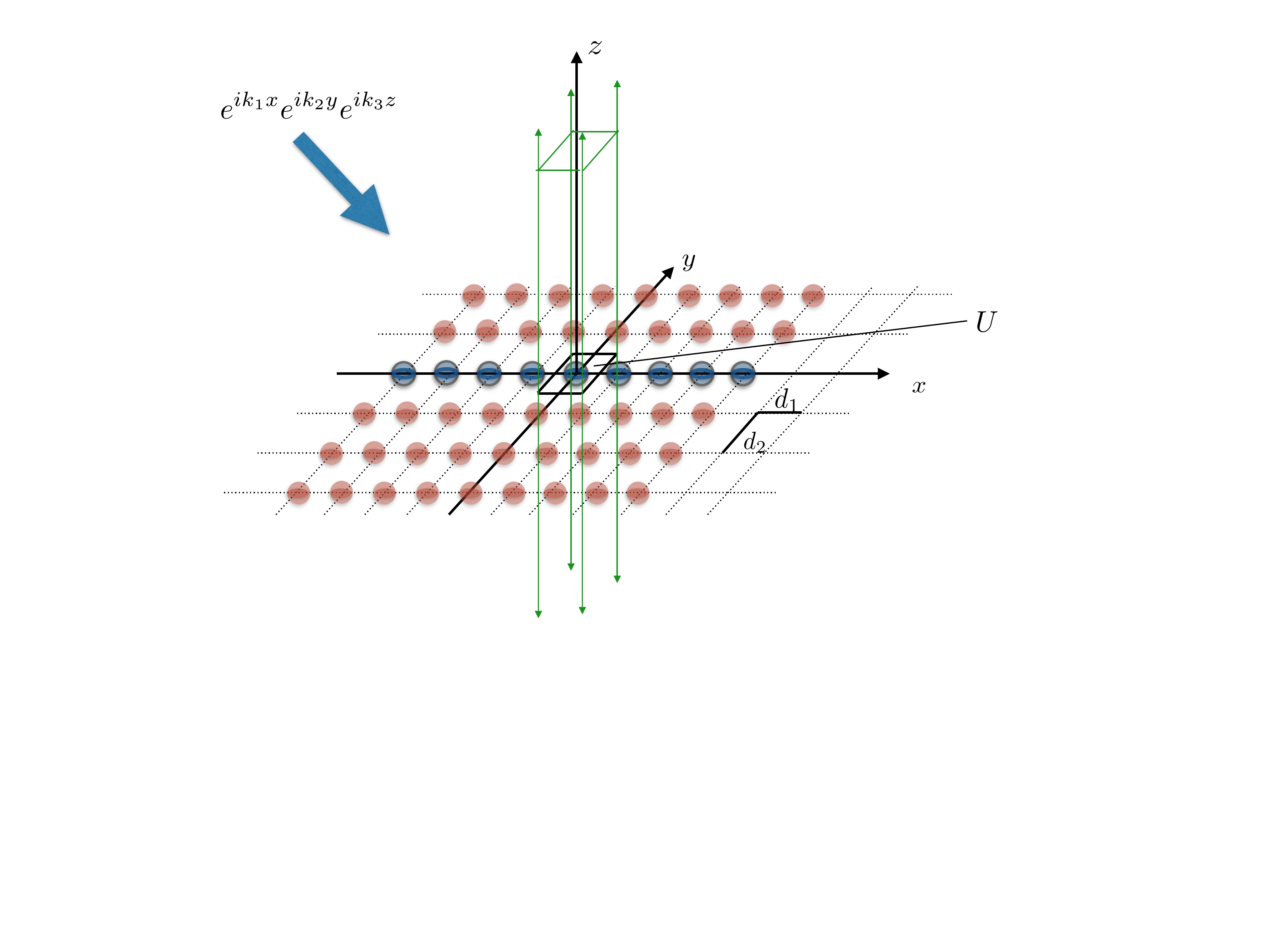}
\includegraphics[width=0.45\textwidth]{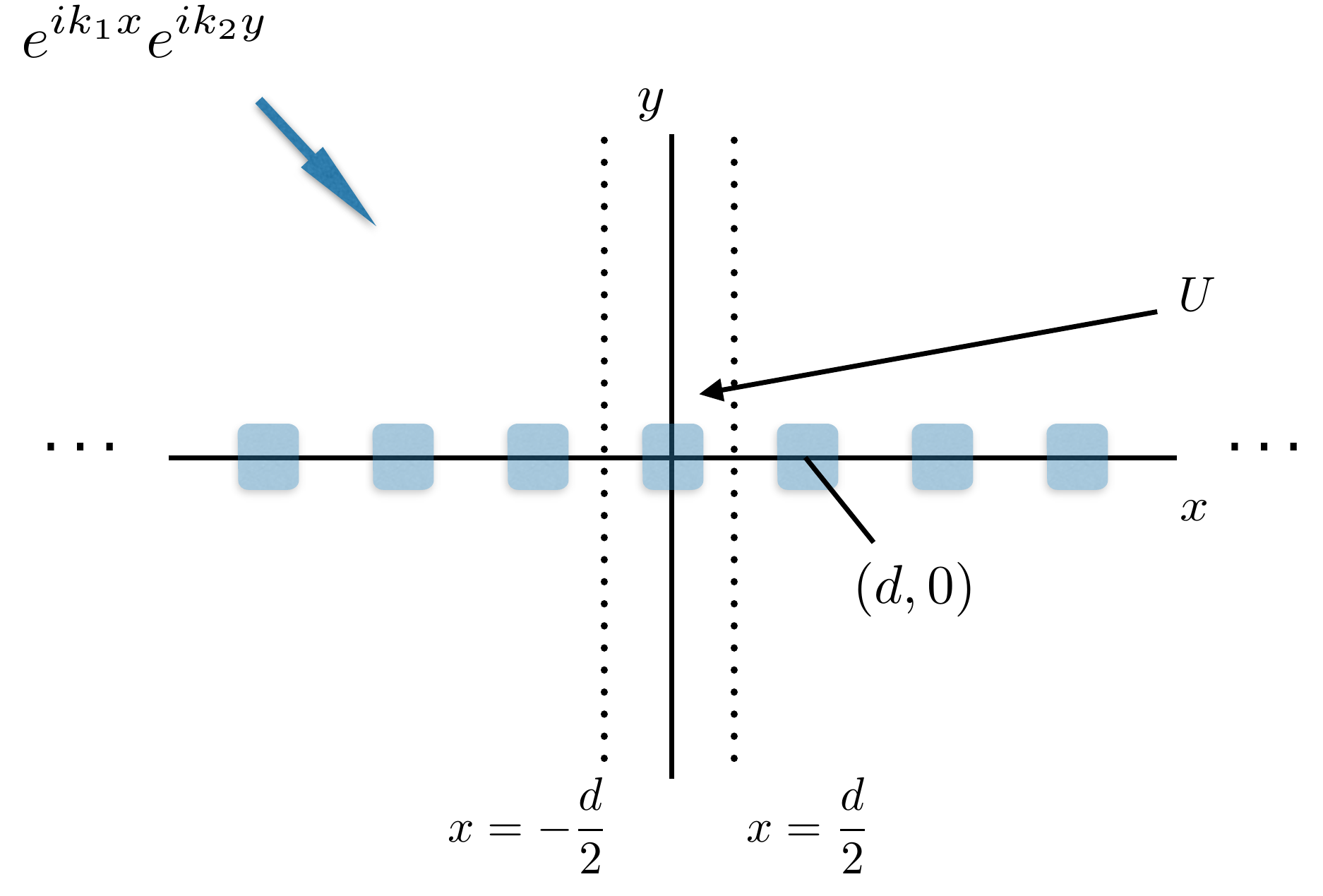}
\caption{On the left is a periodic two-dimensional array of 
spherical scatterers lying in the $xy$-plane, with an incident 
plane wave at frequency
$k = \sqrt{k_1^2 + k_2^2 + k_3^2}$ traveling in the direction 
$(k_1,k_2,k_3)$. The unit cell is denoted by $U$.
On the right is a periodic one-dimensional array of 
scatterers lying along the $x$-axis, with an incident 
plane wave at frequency
$k = \sqrt{k_1^2 + k_2^2}$ traveling in the direction 
$(k_1,k_2)$. The unit cell is again denoted by $U$.
\label{scatfig}}
\end{figure}

Without entering into the details of integral equation methods or
multiple scattering theory 
\cite{Chew,Morse,vandeHulst}, 
we note that the 
Green's function for the Helmholtz equation in free space 
is given by 
\[ G_{3D}(x,y,z) =  \frac{e^{i k r}}{4 \pi r} = 
{\left(\frac{i k}{4 \pi}\right)} h_0(kr),
\quad  G_{2D}(x,y,z) =  \frac{H_0(kr)}{4i} \, ,
\]
in three and two dimensions, respectively, where
$r = \sqrt{x^2+y^2 + z^2}$ or
$\sqrt{x^2+y^2}$ depending on the dimension, 
$h_0(x)$ denotes the spherical Hankel function of order zero, and
$H_0(x)$ denotes the standard Hankel function of order zero.
For quasiperiodic scattering, the Green's function can be expressed
formally as
\begin{align}
 G^{QP}_{3D}(x,y,z) &=  
\frac{i k}{4 \pi}  \sum_{n=-\infty}^\infty 
\sum_{m=-\infty}^\infty h_0(k\sqrt{(x-nd_1)^2 + (y-md_2)^2 + z^2})
e^{ink_1d_1} e^{imk_2d_2} \, ,
\label{qpg3d}
\\
G^{QP}_{2D}(x,y) &=  
\frac{1}{4 i}  \sum_{n=-\infty}^\infty 
H_0(k\sqrt{(x-nd)^2 + y^2})
e^{ik_1nd} \, .
\label{qpg2d}
\end{align}
Note that 
both formulas (\ref{qpg3d}) and (\ref{qpg2d}) are simply infinite series
of translated fundamental solutions.

By making use of standard addition theorems \cite{Morse}, it is 
straightforward and well-known that we can write
\[
 G^{QP}_{3D}(x,y,z) =  
\frac{i k}{4 \pi} 
\left[ h_0(kr) +  
\sum_{l=0}^\infty \sum_{j=-l}^l  S_{l,j} Y_l^{-j}(\theta,\phi) j_l(kr)
\right]  \, ,
\]
where 
\begin{align}
 S_{l,j} &= S_{l,j}(d_1,d_2,k,k_1,k_2) \nonumber \\
&= 
 \sum_{\substack{n,m = -\infty \\ (n,m) \neq (0,0)}}
^\infty 
 h_l(k\sqrt{(nd_1)^2 + (md_2)^2}) Y_l^j \left(\frac{\pi}{2},\phi_n^m \right)
e^{ink_1d_1} e^{imk_2d_2} \, .
\label{Snmdef}
\end{align}
Here, $h_n$ and $j_n$ denote the spherical Hankel and Bessel functions
of order $n$ and $\phi_n^m$ is the angle subtended by the point $(nd_1,md_2)$
with respect to the $x$-axis.
The function $Y_l^j$ denotes 
the spherical harmonic of degree $l$ and order $j$: 
\begin{equation}
Y_l^j(\theta,\phi) =
\sqrt{\frac{2l+1}{4 \pi}}
\sqrt{\frac{(l-|j|)!}{(l+|j|)!}}\, P_l^{|j|}(\cos \theta)
		      e^{i j \phi} \, ,
\label{ynmpnm}
\end{equation}
where the associated Legendre functions $P_l^j$ can be 
defined by the Rodrigues' formula
\[ P_l^j(x) = (-1)^j (1-x^2)^{j/2} \frac{d^j}{dx^j} P_l(x), \] 
with $P_l(x)$ the standard Legendre polynomial of degree $l$.

Similarly, in two dimensions, we have
\[
 G^{QP}_{2D}(x,y,z) =  
\frac{1}{4i} \left[  H_0(kr) +  
\sum_{l=-\infty}^\infty S_{l} J_l(kr) e^{i l \theta} \right] \, ,
\]
where 
$H_n$ and $J_n$ denote the usual Hankel and Bessel functions and
\begin{equation}
 S_{l} = S_{l}(d,k,k_1) = 
\sum_{n\in\mathbb{Z}\backslash\{0\}}
H_l(|n|kd) (\operatorname{sgn}n)^l e^{ik_1nd} \, .
\label{Sndef}
\end{equation}
The sums appearing in (\ref{Snmdef}) and (\ref{Sndef}) are referred to as 
{\em lattice sums} 
\cite{Linton2010,McPhedranGrubits2000,Moroz2001,Twersky1961}.
If the expressions in (\ref{qpg3d}), (\ref{qpg2d}) 
(\ref{Snmdef}) and (\ref{Sndef}) were well-defined, it would be 
straightforward to verify that 
$G^{QP}_{3D}$ and $G^{QP}_{2D}$ satisfy the desired quasiperiodicity
conditions. 

Unfortunately, three fundamental difficulties are encountered
in the use of lattice sums: they are conditionally convergent,
their ``direct" numerical evaluation
is extremely slow by naive methods, and they diverge for 
certain values of the wave parameters $k_1,k_2,k_3$ and lattice parameters
$d_1$ and $d_2$ - giving rise to what are 
known as Wood's anomalies \cite{Wood1902}.
The behavior of the scattered
field is quite striking in the neighborhood of those parameter values, 
as discussed in  \cite{BarnettGreengard2011,Linton2010,Maystre2012}
and in some detail below. In the two-dimensional case, 
it is straightforward to see from Fourier analysis
that the scattered field (away from the obstacle) must take the form
\cite{Linton2010}
\[ u^{scat}(x,y) = \sum_{n=-\infty}^\infty 
a_n
e^{2 \pi i nx/d} e^{i k_1 x} e^{i \beta_n y} \, ,
\]
for $y >0$ and 
\[ u^{scat}(x,y) = \sum_{n=-\infty}^\infty 
b_n
e^{2 \pi i nx/d} e^{i k \cos \psi x} e^{-i \beta_n y} \, ,
\]
for $y<0$,
where $\beta_n^2 + \left(k_1 + \frac{2\pi n}{d} \right)^2 = k^2$
with the root taken as positive real or positive imaginary. 
Real values of $\beta_n$ correspond to {\em propagating} modes, 
while values of $\beta_n$ on the positive imaginary axis 
correspond to {\em evanescent} modes. Wood's anomalies occur when 
$\beta_n = 0$ and the scattered wave is propagating exactly 
along the array in the $x$-direction - a very special type of physical 
resonance.
In three dimensions,
the scattered field (away from the plane of obstacles) must take the form
\[ u^{scat}(x,y,z) = \sum_{m,n \in \mathbb{Z}}
a_{m,n}
e^{2 \pi i nx/d_1} e^{i k_1 x} 
e^{2 \pi i my/d_2} e^{i k_2 y} 
e^{i \beta_{n,m} z} \, ,
\]
for $z >0$ and 
\[ u^{scat}(x,y,z) = \sum_{m,n \in \mathbb{Z}}
b_{m,n}
e^{2 \pi i nx/d_1} e^{i k_1 x} 
e^{2 \pi i my/d_2} e^{i k_2 y} 
e^{-i \beta_{n,m} z} \, ,
\]
for $z <0$, where
\begin{equation}
\beta_{n,m}^2 + \left(k_1 + \frac{2\pi n}{d_1} \right)^2 +
\left(k_2 + \frac{2\pi m}{d_2} \right)^2 = k^2.
\label{wood2d}
\end{equation}
Wood's anomalies occur when 
$\beta_{n,m} = 0$ and the scattered wave is propagating in some direction
along the $xy$-plane \cite{LiuBarnett2016,Shipman,LintonThompson2007}.

The computation of lattice sums and the resonant behavior corresponding
to Wood's anomalies have been widely studied, 
from both a physical and a mathematical perspective (see, for example,
\cite{BarnettGreengard2011,Kurkcu2009,Linton2010,LintonThompson2007,
LintonThompson2009,Maystre2012,McPhedranGrubits2000,Moroz2001,Moroz2006,
Shipman,Twersky1961,YasumotoYoshitomi1999}). 
Oddly enough, in the numerical literature
for evaluating lattice sums, the occurence of Wood's anomalies is 
often ignored, despite the fact that the series can diverge and
despite the potential loss 
of accuracy in computational results near such singularities.

In this paper, we focus on the 
rigorous analysis of one-dimensional 
lattice sums using a novel method based on quadrature, 
Euler-MacLaurin corrections to the trapezoidal rule, and steepest descent 
arguments. The reason for concentrating on the one-dimensional case is the 
remarkable work of 
McPhedran,  Nicorovici, Botten, Grubits, Enoch and Nixon
\cite{McPhedranNixon2001,McPhedranGrubits2000}, who showed that,
once the
one-dimensional lattice sums along the $x$-axis are obtained,
highlighted in blue on the left-hand side of Fig.  \ref{scatfig},
the remaining contributions
in higher dimensions can be computed semi-analytically using the Poisson
summation formula.
This technique, sometimes referred to as 
\emph{lattice reduction} \cite{LintonThompson2009}, 
is outlined briefly in section \ref{latticeredux}.

From a physical perspective, the correct choice of the conditionally
convergent lattice sums corresponds to adding an infinitesimal amount
of dissipation: that is, we replace 
the real wave number $k$ by $k+i \epsilon$ and then let
$\epsilon \rightarrow 0^+$. For any fixed $\epsilon > 0$,
the relevant infinite series converges absolutely.
Thus, instead of \eqref{Sndef} and \eqref{Snmdef},
we {\em define} the conditionally convergent
lattice sums by
\begin{equation}
 S_{l} = \lim_{\epsilon \rightarrow 0^+}
 \sum_{n\in\mathbb{Z}\backslash\{0\}}
H_l(|n|(k+i\epsilon) d) (\operatorname{sgn}n)^l e^{ik_1nd} \, 
\label{Sndefeps}
\end{equation}
and
\begin{equation}
 S_{l,j} = 
 \lim_{\epsilon \rightarrow 0^+}
 \sum_{\substack{n,m = -\infty \\ (n,m) \neq (0,0)}}
^\infty 
 h_l((k+i\epsilon)\sqrt{(nd_1)^2 + (md_2)^2}) Y_l^j 
\left(\frac{\pi}{2},\phi_n^m \right)
e^{ink_1d_1} e^{imk_2d_2} \, .
\label{Snmdefeps}
\end{equation}

The existence of these limits will
be established in our analysis, and the original
physical interpretation does not play a role. 
We will show that the one-dimensional sums may be evaluated through 
techniques of complex analysis, combined with the Euler-Maclaurin 
formula with superalgebraic convergence.
Our fast algorithm is derived in sections \ref{sec-1d} and \ref{secfast}, and
with proofs collected in sections \ref{sec-estimates1} and
\ref{sec-estimates2}. Section \ref{sec-numerical}
presents some numerical experiments, and section \ref{concl} contains
some concluding remarks.

\section{Lattice Reduction} \label{latticeredux}

We illustrate the lattice reduction technique of 
\cite{McPhedranNixon2001,McPhedranGrubits2000}
for the case shown on the left-hand side of Fig. \ref{scatfig},
with the added simplification that
we assume the lattice to be square with unit cell of area one 
($d_1 = d_2 = 1$). 
For the prescribed wavenumber $k$, the Bloch phases
are then given by $(e^{i\alpha}, e^{i\beta}) = (e^{ik_1}, e^{ik_2})$.
For the sake of brevity, we consider only the lattice sum
$S_{0,0}$ from (\ref{Snmdef}) which now takes the form:
\begin{align}
S_{0,0} &=  \sum_{(n,m)\neq (0,0)} h_0(k \sqrt{n^2 + m^2})
e^{i \alpha n} e^{i \beta m}  \nonumber \\
&= S_{0,0}^+ + S^{grating}_{0,0} + S_{0,0}^-  \, , \label{partial_sums} 
\end{align}
where
\[
S_{0,0,}^+ = \sum_{m>0} \sum_{n} h_0(k \sqrt{n^2 + m^2})
e^{i \alpha n} e^{i \beta m}  \, ,\quad
S_{0,0}^-  = \sum_{m<0} \sum_{n} h_0(k \sqrt{n^2 + m^2})
e^{i \alpha n} e^{i \beta m} \, ,
\]
and
\begin{equation}
S_{0,0}^{grating} = \sum_{n \neq 0}
 h_0(k |n|) e^{i \alpha n} \, . \label{puncsum}
\end{equation}
Let us consider the sum $S^{+}_{0,0}$, which we write in the form
\begin{equation*}
S^+_{0,0} = \sum_{m > 0}
\sum_{n\in\mathbb{Z}} 
h_0 ( k \sqrt{n^2 + m^2})
e^{i \alpha n} e^{i \beta m} \,  = \  \sum_{m > 0}
e^{i \beta m} s_m \, ,
\end{equation*}
where
\[
s_m = \sum_{n\in \mathbb{Z}} h_0 ( k \sqrt{n^2 + m^2} )
e^{i \alpha n} \, .
\]
($S^{-}_{0,0}$ is treated in an analogous fashion.) 

The important things to note about $s_m$ are 
(1) that it is the sum of the function $h_0(k \sqrt{x^2 + m^2}) e^{i \alpha x}$
sampled at the integers $n$, (2) that the function is smooth 
since $m>0$, and (3) that
the spherical Hankel function $h_0$ has the spectral representation
\cite{Chew,Morse,vandeHulst} 
\begin{equation*}
h_0(k\sqrt{x^2 + y^2 + z^2}) = \frac{1}{2 \pi i k}
\int_{-\infty}^{\infty} \int_{-\infty}^{\infty} 
\frac{e^{- y \sqrt{s^2 + t^2 - k^2}}}{\sqrt{s^2 + t^2 - k^2}}
e^{isx} e^{itz} \, ds\, dt \, ,
\end{equation*}
for $y>0$.
From this, letting $x =n$, $y=m$, and $z=0$,  we have
\begin{equation*}
s_m =
\frac{1}{2 \pi i k}
\sum_{n\in\mathbb{Z}} 
\int_{-\infty}^{\infty} \int_{-\infty}^{\infty} 
\frac{e^{- m \sqrt{s^2 + t^2 - k^2}}}{\sqrt{s^2 + t^2 - k^2}}
e^{in(s+\alpha)} \, ds\, dt\, .
\end{equation*}
We now apply the Poisson
summation formula 
\cite{McPhedranGrubits2000,Twersky1961},
which we write informally as
\[ \sum_n e^{inx} = 2 \pi \, \sum_n \delta(x+2\pi n). \]
This yields 
\begin{equation*}
s_m = 
\frac{1}{i k}
\sum_{n\in\mathbb{Z}} 
\int_{-\infty}^{\infty} 
\frac{e^{- m \sqrt{t^2-k^2 + (2 \pi n - \alpha)^2}}}
{\sqrt{t^2 - k^2 + (2 \pi n - \alpha)^2}} \, dt.
\end{equation*}
From this, we have
\begin{equation*}
\begin{aligned}
S^{+}_{0,0} = &
\frac{1}{i k}
\sum_{m > 0}\sum_{n\in\mathbb{Z}}
e^{i\beta m}
\int_{-\infty}^{\infty} 
\frac{e^{- m \sqrt{t^2-k^2 + (2 \pi n - \alpha)^2}}}
{\sqrt{t^2 - k^2 + (2 \pi n - \alpha)^2}} \, dt \\
=&
\frac{1}{ik}
\sum_{n\in\mathbb{Z}}
\int_{-\infty}^{\infty} 
\frac{e^{i\beta- \sqrt{t^2-k^2 + (2 \pi n - \alpha)^2}}}
{\sqrt{t^2 - k^2 + (2 \pi n - \alpha)^2}} \, 
\frac{1}
{1 - e^{i\beta - \sqrt{t^2 - k^2 + (2 \pi n - \alpha)^2}}} \, dt.
\end{aligned}
\end{equation*}
The last expression is obtained by summing a geometric series in the index
$m$. Assuming that this formal manipulation makes sense,
the resulting integral is rapidly converging in $t$ and the outer 
summation is rapidly converging in $n$. 
We omit further details, referring the 
reader to \cite{McPhedranNixon2001,McPhedranGrubits2000}. Suffice it to say
that, as a result of this observation, the 
principal obstacle in evaluating the lattice sum $S_{0,0}$ is that of
computing the one-dimensional  sum 
$S^{grating}_{0,0}$ in (\ref{puncsum}). 
This ``punctured sum" cannot be evaluated through the Poisson
summation formula directly, since the summand at $n=0$ is undefined 
(although Ewald type methods could be used to overcome 
this \cite{Linton2010}).

The remainder of this paper is devoted to a 
new approach for the punctured sum, which results in a fast algorithm and
may be of mathematical interest in its own right.

\section{One-dimensional lattice sums and the Euler- \\
MacLaurin formula} \label{sec-1d}

To develop a unified framework that can handle one-dimensional
sums such as 
$S^{grating}_{0,0}$ in (\ref{puncsum}) or $S_l$ in (\ref{Sndef}),
let us now fix $k \in \mathbb{R}$, $d>0$ and suppose 
that $f(z)$ is a complex-analytic
function such that, for some $\beta > 0$, there exists
a representation for the $p$th derivative of the form
\begin{equation}
f^{(p)}(z) = \frac{\Phi_p(z)}{z^\beta} e^{iz}\, ,
\end{equation}
where $\Phi_p(z)$ has an asymptotic series $\Phi_p(z)\sim\sum_{\mu=0}^{\infty}
c_{p,\mu} z^{-\mu}$ valid for $|\textnormal{arg} \, z|<\pi/2 + \delta$ and
$|z|\rightarrow\infty$, for sufficiently small $\delta > 0$. 
We also assume that $\Phi_p$ is bounded on the
region $\left\{ |z| > \delta^\prime, \; |\textnormal{arg} \,  z| < \pi/2
+ \delta\right\}$, for each $\delta^\prime > 0$. 
The Bessel functions $h_n$ and $H_n$ are well-known to satisfy such
estimates \cite{AbramowitzStegun1972}.

For $\epsilon>0$ and $\alpha\in\mathbb{R}$, we define the function
\begin{equation}
{F}_{\alpha}^{\epsilon}(z) = f\left((k+i\epsilon)z\right)
e^{i\alpha z} \, .
\end{equation}
We shall denote by $F_{\alpha}$ the
corresponding expression with $\epsilon=0$. For reasons that will become
apparent later, we shall assume that $\left\{
(k \pm \alpha)d/2\pi\right\} \cap \mathbb{Z} = \varnothing $.
We now consider absolutely convergent sums of the general form:
\begin{equation}
{S}^{\epsilon}(\alpha) = \sum_{n>0}
{F}_{\alpha}^{\epsilon}(nd) \, .
\end{equation}
Clearly, for $\ell \in \mathbb{Z}$, sums of the form
\[  \sum_{n \in \mathbb{Z} \setminus \{0\}} f( (k+i\epsilon)|nd|) 
(\textnormal{sgn}\, n)^l e^{i\alpha nd} = {S}^{\epsilon}(\alpha) +
(-1)^\ell {S}^{\epsilon}(-\alpha), \]
so we shall concern ourselves
primarily with ${S}^{\epsilon}(\alpha)$. 
This framework covers the cases of physical interest in quasiperiodic 
scattering.

Our approach 
will require the use of an infinitely differentiable filter function,
which we introduce here.

\begin{definition} \label{filtdef}
Let $\psi\in C^{\infty}([0,1])$  be a decreasing function
such that, for some $r$ with $0 <r < \frac{1}{4}$, 
$\psi|_{[0,2r]}=1$ and $\psi|_{[1-2r,1]}=0$.
For $b,c\in\mathbb{N}$, we denote
the scaled filter function 
$\psi_{b,c} (x) $ by
\begin{equation}
\psi_{b,c}(x) = \begin{cases}
1,& x<b \, ,\\
\psi\left[\frac{1}{c}(x-b)\right],& b\leq x\leq b+c \, ,\\
0,& b+c<x \, .
\end{cases}
\end{equation}
See Fig. \ref{figfilt}. Note that the parameter $r$ determines both how flat the
scaled filter function $\psi_{b,c}(x)$ is at the points $x=b$ and $x=b+c$, and 
how steep the transition is from $1$ to $0$.
\end{definition}

From the properties of $\psi_{b,c}$, we may write 
\begin{equation*}
{S}^{\epsilon}(\alpha)=\sum_{n=1}^{b-1}{F}_{\alpha}^{\epsilon}(nd)
+ \sum_{n=b}^{b+c} {F}_{\alpha}^{\epsilon}(nd)\psi_{b,c}(n) +
\sum_{n=b}^{\infty}{F}_{\alpha}^{\epsilon}(nd)(1-\psi_{b,c}(n)) \, .
\end{equation*}

Note that the third term corresponds to the trapezoidal approximation for 
the integral
\[
\int_{b}^{\infty}{F}_{\alpha}(xd)(1-\psi_{b,c}(x)) \, dx.
\] 
We now show that this integral can be evaluated 
by contour deformation and that the difference between the integral and the 
desired sum can be computed with high precision using Euler-MacLaurin corrections.

\begin{lemma} \label{contourlemma}
Let $\phi\in(-\pi/2,\pi/2)$, such that $\tan\phi=(k+\alpha)/\epsilon$.
Then
\[
\int_{b}^{\infty} {F}_{\alpha}^{\epsilon}(xd)\, dx
= \, \frac{1}{d} \int_{0}^\infty
 {F}_{\alpha}^{\epsilon}(bd+\zeta\tau)\, \zeta \, d\tau
\]
where 
$\zeta = e^{i \phi} = \frac{\epsilon+i(k+\alpha)}{\sqrt{\epsilon^2+(k+\alpha)^2}}$.
\end{lemma}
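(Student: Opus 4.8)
The plan is to justify a rotation of the contour of integration in the complex plane, exploiting the exponential decay of $F_\alpha^\epsilon$. Writing out the integrand, we have
\[
F_\alpha^\epsilon(xd) = f\bigl((k+i\epsilon)xd\bigr)e^{i\alpha xd}
= \frac{\Phi_0\bigl((k+i\epsilon)xd\bigr)}{\bigl((k+i\epsilon)xd\bigr)^\beta}\,e^{i(k+i\epsilon)xd}\,e^{i\alpha xd},
\]
so the exponential factor is $e^{i(k+\alpha)xd}e^{-\epsilon xd}$. First I would substitute $x = b + \tau/d$ (equivalently parametrize the ray from $bd$), which turns the left-hand side into $\frac1d\int_0^\infty F_\alpha^\epsilon(bd+\tau)\,d\tau$; the claimed identity then says this equals $\frac1d\int_0^\infty F_\alpha^\epsilon(bd+\zeta\tau)\,\zeta\,d\tau$, i.e. the integration path from $bd$ is rotated by the angle $\phi$ with $\zeta = e^{i\phi}$. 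The point of the particular choice $\tan\phi = (k+\alpha)/\epsilon$ is that along the rotated ray $w = bd + \zeta\tau$ the combined exponent becomes
\[
i(k+i\epsilon)(bd+\zeta\tau) + i\alpha(bd+\zeta\tau)
= \bigl[i(k+\alpha)bd - \epsilon bd\bigr] + \tau\,\zeta\bigl(i(k+\alpha)-\epsilon\bigr),
\]
and since $i(k+\alpha)-\epsilon = i\sqrt{\epsilon^2+(k+\alpha)^2}\,\overline{\zeta} \cdot(\cdots)$ — more precisely $\zeta\bigl(i(k+\alpha)-\epsilon\bigr)$ has real part $-\sqrt{\epsilon^2+(k+\alpha)^2}\,$ by the defining relation for $\phi$ — the integrand decays like $e^{-\tau\sqrt{\epsilon^2+(k+\alpha)^2}}$ along the rotated ray. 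So both integrals converge absolutely.

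Second, I would set up the standard contour argument: fix $R>0$ and integrate the entire function $z\mapsto F_\alpha^\epsilon(z)$ (entire in the relevant sector since $f$ is complex-analytic away from $0$ and $bd>0$) around the closed contour consisting of the segment $[bd,\,bd+R]$ on the real axis, the circular arc of radius $R$ centered at $bd$ from angle $0$ to angle $\phi$, and the segment back along the rotated ray from $bd+Re^{i\phi}$ to $bd$. By Cauchy's theorem the contour integral vanishes (the region swept out lies in $\{\operatorname{Re} z \ge bd > 0\}$, well away from the singularity at the origin and inside the sector $|\arg z| < \pi/2+\delta$ where the asymptotics for $\Phi_0$ hold). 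Letting $R\to\infty$ then yields the desired identity, provided the arc contribution vanishes in the limit.

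The main obstacle — really the only substantive point — is the estimate on the circular arc. On the arc $z = bd + Re^{i\theta}$, $\theta\in[0,\phi]$ (or $[\phi,0]$ if $\phi<0$), the modulus of the integrand is controlled by $|\Phi_0(z)|\,|z|^{-\beta}\,e^{-\operatorname{Im} z}\,e^{-\alpha\operatorname{Im}z}$-type factors; the key is that $\operatorname{Re}\bigl(i(k+i\epsilon)z + i\alpha z\bigr) = -(k+\alpha)\operatorname{Im}z - \epsilon\operatorname{Re}z$, and one checks that for every $\theta$ strictly between $0$ and $\phi$ this quantity tends to $-\infty$ like a positive multiple of $R$ as $R\to\infty$ (this is exactly where the sign conditions and the specific angle $\phi$ matter — the rotation stays within the half-plane where the exponential helps rather than hurts), while $|\Phi_0|$ stays bounded by hypothesis and $|z|^{-\beta}$ only helps. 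A Jordan's-lemma-style argument, splitting off a small neighborhood of the endpoints $\theta=0,\phi$ where the exponent may not be large but the arc length is $O(1)$ relative to... — more carefully, one uses that the exponent is $\le -cR\sin(\text{something positive})$ uniformly, or simply dominates the whole arc integral by $R \cdot \sup_{\text{arc}} |\text{integrand}| \le R \cdot C R^{-\beta} e^{-c'R} \to 0$ using that the exponent is bounded above by $-c' R$ for a constant $c'>0$ depending only on $\phi$, $k+\alpha$, $\epsilon$. I would carry this out explicitly, taking care that the estimate is uniform in $\theta$ across the arc. Once the arc term is killed, equating the two straight-segment contributions and applying the substitution $z = bd+\zeta\tau$ (so $dz = \zeta\,d\tau$ and $dx = dz/d$ on the original segment) gives the statement.
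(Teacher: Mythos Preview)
Your proposal is correct and follows essentially the same route as the paper: substitute to write the integral as $\frac{1}{d}\int_0^\infty F_\alpha^\epsilon(bd+t)\,dt$, deform the contour from the positive real ray to the ray at angle $\phi$, and show the closing arc of radius $R$ contributes $O(R\,e^{-c'R})\to 0$ using the boundedness of $\Phi_0$ and the exponential factor. The paper's arc bound is the slightly cruder $\frac{C}{(|k|bd)^\beta}\,R\,e^{-\epsilon R\cos\phi}$, obtained by keeping only the $\epsilon$-part of the exponent; your uniform $-c'R$ estimate with $c'=\epsilon$ is the same observation (one minor quibble: ``entire'' should read ``analytic in the sector'').
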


\begin{proof}

Let $C_0$, $C_\phi$ denote the outward rays
in the complex plane with $\arg z = 0$ and $\arg z = \phi$. 
We begin by writing
\begin{equation*}
\int_{b}^{\infty} {F}_{\alpha}^{\epsilon}(xd)\, dx
=\frac{1}{d} \int_{0}^{\infty}{F}_{\alpha}^{\epsilon}(bd+t)\, dt
=\frac{1}{d} \int_{C_0} {F}_{\alpha}^{\epsilon}(bd+z)\, dz
\end{equation*}
Now let $C_0(R) = [0,R]$ and let $C_\phi(R) = [0,R\zeta]$.
Let $B(R)$ denote the circular arc from $R$ to $R\zeta$, let 
$I=[0,\phi]$ if $\phi > 0$, and let
$I = [\phi,0]$ if $\phi < 0$.
It is straightforward to verify that
\begin{equation*}
\begin{aligned}
\left|\int_{B(R)}{F}_{\alpha}^{\epsilon}(bd+z)\,dz\right|
=&\left|\int_I
{F}_{\alpha}^{\epsilon}(bd+R e^{i\phi^\prime})
R i e^{i\phi^\prime}
d\phi^\prime\right|\,
\leq \frac{C}{(|k|bd)^\beta}\cdot R e^{-\epsilon R \cos\phi} \, ,
\end{aligned}
\end{equation*}
which goes to zero as $R\rightarrow\infty$.
This justifies the contour deformation from $C_0$ to $C_\phi$:
\begin{equation*}
\int_{b}^{\infty} {F}_{\alpha}^{\epsilon}(xd)\, dx
= \, \frac{1}{d} \int_{C_\phi} {F}_{\alpha}^{\epsilon}(bd+z)\, dz
= \, \frac{1}{d} \int_{0}^{\infty} {F}_{\alpha}^{\epsilon}
\left(bd+\zeta \tau\right)
\zeta \, d\tau \, .
\end{equation*}
\hfill $\square$
\end{proof}

\begin{figure}[h]
\centering
\includegraphics[width=0.6\textwidth]{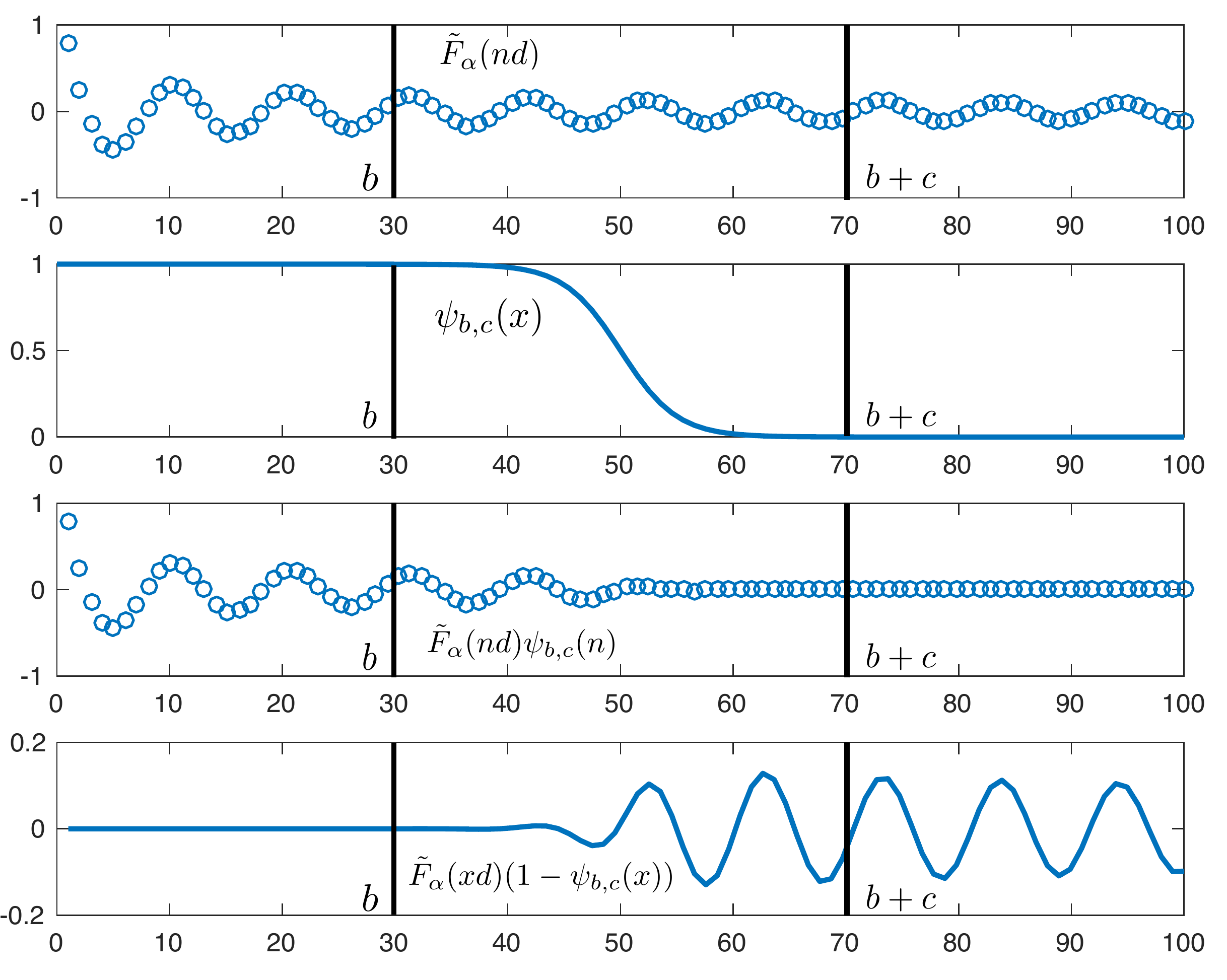}
\caption{{\em Lattice sums and the Euler-MacLaurin formula}:
The top row shows the discrete values of ${F}_\alpha(nd)$ whose sum
corresponds to ${S}(\alpha)$. The second row depicts an infinitely
differentiable filter function
$\psi_{b,c}(x)$, which equals $1$ for $x\leq b$ and $0$ for
$x>b+c$.  The third row is a plot of the discrete values
${F}_{\alpha}(nd)\psi_{b,c}(n)$ and the bottom row is a plot of 
${F}_{\alpha}(xd)(1-\psi_{b,c}(x))$. Our method exploits the relation
between the desired lattice sum and the improper integral
$\int_{n=b}^{\infty}{F}_{\alpha}(xd)(1-\psi_{b,c}(x)) \, dx,$ 
which involves Euler-MacLaurin corrections at the endpoints.}
\label{figfilt}
\end{figure}

\begin{theorem}
Let $\psi_{b,c}$ be a scaled filter function (Definition \ref{filtdef}.)
Then, for any fixed integer $s\geq 2$, we have
\begin{equation}
\label{eq:algorithm_formula}
\begin{aligned}
{S}^{\epsilon}(\alpha)=
&\sum_{n=1}^{b-1}{F}_{\alpha}^{\epsilon}(nd)
+\sum_{n=b}^{b+c}{F}_{\alpha}^{\epsilon}(nd)\psi_{b,c}(n)
-\int_{b}^{b+c}{F}_{\alpha}^{\epsilon}(xd)\psi_{b,c}(x)\,dx\\
&+\frac{1}{d} \int_{0}^{\infty} {F}_{\alpha}^{\epsilon}
\left(bd+\frac{\epsilon+i(k+\alpha)}{\sqrt{\epsilon^2+(k+\alpha)^2}}\tau\right)
\frac{\epsilon+i(k+\alpha)}{\sqrt{\epsilon^2+(k+\alpha)^2}}\,d\tau
+R_{s,b,c}^{\epsilon}(\alpha) \, ,
\end{aligned}
\end{equation}
where 
\begin{equation}
\label{eq:err_formula}
R_{s,b,c}^{\epsilon}(\alpha) = \frac{(-1)^{s-1}}{s!}
\int_{b}^{\infty} B_s(x-[x])\left[\left.\frac{d^s}{dt^s}\left(
{F}_{\alpha}^{\epsilon}(td)(1-\psi_{b,c}(t))\right)\right|_{t=x}\right]\,dx \, ,
\end{equation}
and $B_s(x)$ denotes the Bernoulli polynomial of order $s$.
\end{theorem}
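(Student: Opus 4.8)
The plan is to feed the third sum in the decomposition
\[
{S}^{\epsilon}(\alpha)=\sum_{n=1}^{b-1}{F}_{\alpha}^{\epsilon}(nd)
+ \sum_{n=b}^{b+c} {F}_{\alpha}^{\epsilon}(nd)\psi_{b,c}(n)
+ \sum_{n=b}^{\infty}{F}_{\alpha}^{\epsilon}(nd)(1-\psi_{b,c}(n))
\]
recorded just before the theorem into the Euler--Maclaurin formula, and then to rewrite the integral it produces by means of Lemma~\ref{contourlemma}. Set $g(x) := {F}_{\alpha}^{\epsilon}(xd)\,(1-\psi_{b,c}(x))$, so the third sum is $\sum_{n\ge b} g(n)$. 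The first two sums are already, verbatim, the first two terms of \eqref{eq:algorithm_formula}, so the whole content is the analysis of $\sum_{n\ge b}g(n)$.

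First I would record the analytic properties of $g$. Since $\epsilon>0$ and $f$ is complex-analytic, and since $x\ge b\ge 1$ keeps the argument $(k+i\epsilon)xd$ bounded away from the origin, $g$ is $C^{\infty}$ on $[b,\infty)$. Because $\psi|_{[0,2r]}=1$, the scaled filter equals $1$ on a neighbourhood of $b$, so $g$ vanishes identically near $x=b$; in particular $g^{(j)}(b)=0$ for every $j\ge 0$. For $x>b+c$ we have $1-\psi_{b,c}\equiv 1$, hence there $g(x)=f((k+i\epsilon)xd)e^{i\alpha xd}$, and differentiating and invoking the assumed representation $f^{(p)}(z)=\Phi_p(z)z^{-\beta}e^{iz}$ with $\Phi_p$ bounded on $\{|z|>\delta',\,|\arg z|<\pi/2+\delta\}$ gives $|g^{(j)}(x)|\le C_j\, x^{-\beta} e^{-\epsilon d x}$ for every $j$ and all large $x$ (the factor $e^{-\epsilon d x}$ coming from $|e^{i(k+i\epsilon)xd}|=e^{-\epsilon d x}$); on the bounded window $[b,b+c]$ all derivatives are controlled by the product rule. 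Thus $g^{(s)}\in L^1([b,\infty))$ and $g^{(j)}(x)\to 0$ as $x\to\infty$.

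Next I would apply the Euler--Maclaurin formula, in the form with periodic-Bernoulli-function remainder of order $s$, on each finite interval $[b,N]$ with $N\in\mathbb{N}$:
\[
\sum_{n=b}^{N} g(n)=\int_b^N g(x)\,dx + \frac{g(b)+g(N)}{2}
+ \sum_{j=1}^{\lfloor s/2\rfloor}\frac{B_{2j}}{(2j)!}\bigl(g^{(2j-1)}(N)-g^{(2j-1)}(b)\bigr)
+ \frac{(-1)^{s-1}}{s!}\int_b^N B_s(x-[x])\,g^{(s)}(x)\,dx ,
\]
which is legitimate for $s\ge 2$, as then $x\mapsto B_s(x-[x])$ is continuous. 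By the flatness of $g$ at $b$ all boundary terms at the lower endpoint vanish; by the exponential decay all boundary terms at $N$ tend to $0$ as $N\to\infty$; and $\int_b^N B_s(x-[x])g^{(s)}(x)\,dx\to\int_b^\infty B_s(x-[x])g^{(s)}(x)\,dx$ by dominated convergence while $\int_b^N g\to\int_b^\infty g$. Letting $N\to\infty$ yields $\sum_{n\ge b}g(n)=\int_b^\infty g(x)\,dx + R^{\epsilon}_{s,b,c}(\alpha)$, with $R^{\epsilon}_{s,b,c}(\alpha)$ exactly the expression \eqref{eq:err_formula}. Finally I would split $\int_b^\infty g = \int_b^\infty {F}_{\alpha}^{\epsilon}(xd)\,dx - \int_b^\infty {F}_{\alpha}^{\epsilon}(xd)\psi_{b,c}(x)\,dx$, both integrals absolutely convergent; since $\psi_{b,c}(x)=0$ for $x>b+c$, the second equals $\int_b^{b+c}{F}_{\alpha}^{\epsilon}(xd)\psi_{b,c}(x)\,dx$, the third term of \eqref{eq:algorithm_formula}, and Lemma~\ref{contourlemma} turns $\int_b^\infty {F}_{\alpha}^{\epsilon}(xd)\,dx$ into $\frac1d\int_0^\infty {F}_{\alpha}^{\epsilon}(bd+\zeta\tau)\,\zeta\,d\tau$ with $\zeta=(\epsilon+i(k+\alpha))/\sqrt{\epsilon^2+(k+\alpha)^2}$, the fourth term. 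Collecting the four pieces together with the two unchanged sums gives \eqref{eq:algorithm_formula}.

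The only substantive work is the family of decay estimates for $g$ and all its derivatives and the ensuing justification of the $N\to\infty$ passage inside Euler--Maclaurin; this is precisely where the structural hypothesis on $f^{(p)}$ (boundedness of $\Phi_p$ on the enlarged sector) is used, and the product-rule bookkeeping across the transition window $[b,b+c]$ is the fiddliest point. Everything else is rearrangement plus a direct appeal to Lemma~\ref{contourlemma}.
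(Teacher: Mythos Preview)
Your proposal is correct and follows essentially the same route as the paper's own proof: split $S^\epsilon(\alpha)$ using the filter, apply Euler--Maclaurin to the tail $\sum_{n\ge b}F_\alpha^\epsilon(nd)(1-\psi_{b,c}(n))$, use flatness of $1-\psi_{b,c}$ at $b$ and exponential decay at infinity to kill the boundary terms, then split the resulting integral and invoke Lemma~\ref{contourlemma}. You are somewhat more explicit than the paper about the derivative decay estimates and the dominated-convergence justification for $N\to\infty$, but the argument is the same.
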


\begin{proof}

We have
\begin{equation}
{S}^{\epsilon}(\alpha)= \sum_{n>0}{F}_{\alpha}^{\epsilon}(nd)\\
= \, \sum_{n>0}{F}_{\alpha}^{\epsilon}(nd)\psi_{b,c}(n) +
\sum_{n>0}{F}_{\alpha}^{\epsilon}(nd)(1-\psi_{b,c}(n)) \, .
\end{equation}
Since $\psi_{b,c}(n)=1$, for $n\leq b$, and $\psi_{b,c}(n)=0$, for
$n>b+c$, we have
\begin{equation*}
{S}^{\epsilon}(\alpha)=\sum_{n=1}^{b-1}{F}_{\alpha}^{\epsilon}(nd)
+ \sum_{n=b}^{b+c} {F}_{\alpha}^{\epsilon}(nd)\psi_{b,c}(n) +
\sum_{n=b+1}^{\infty}{F}_{\alpha}^{\epsilon}(nd)(1-\psi_{b,c}(n)) \, .
\end{equation*}
We now employ the Euler-Maclaurin formula
(see \cite{AndrewsRoy1999}, p. 619, Theorem D.2.1) to find that, for
$s\in\mathbb{N}\backslash\{0\}$, $N>b$,
\begin{equation*}
\begin{aligned}
\sum_{n=b+1}^{N} {F}_{\alpha}^{\epsilon}(nd)(1-\psi_{b,c}(n)) =
&\int_{b}^{N} {F}_{\alpha}^{\epsilon}(xd)(1-\psi_{b,c}(x)) \,dx \\
&+\sum_{\gamma=1}^{s} \frac{(-1)^{\gamma}B_{\gamma}}{\gamma!}
\left.\frac{d^{\gamma-1}}{dx^{\gamma-1}}\left({F}_{\alpha}^{\epsilon}(xd)
(1-\psi_{b,c}(x))\right)\right|_{x=N} \\ 
&-\sum_{\gamma=1}^{s} \frac{(-1)^{\gamma}B_{\gamma}}{\gamma!} \left.
\frac{d^{\gamma-1}}{dx^{\gamma-1}}
\left({F}_{\alpha}^{\epsilon}(xd)(1-\psi_{b,c}(x))\right)\right|_{x=b} \\
&+\frac{(-1)^{s-1}}{s!} \int_{b}^{N} B_{s}(x-[x])\left[\left.\frac{d^s}{dt^s}
\left({F}_{\alpha}^{\epsilon}(td)(1-\psi_{b,c}(t))\right)
\right|_{t=x}\right]\, dx\, ,
\end{aligned}
\end{equation*}
where the $B_{s}(x)$ are the Bernoulli polynomials, and the $B_{\gamma}$ are
Bernoulli numbers. Observe that $(1-\psi_{b,c}(x))$ is identically zero in a
neighborhood of $x=b$ and that the boundary term at $x=b$ vanishes outright.
Moreover, ${F}_{\alpha}^{\epsilon}(xd)$ and all its derivatives tend
to zero exponentially as $x\rightarrow\infty$. Thus, only the two integral
terms on the right hand side of the Euler-Maclaurin formula are preserved in
the limit $N\rightarrow\infty$. 

Thus, 
\begin{equation*}
\begin{aligned}
\sum_{n=b+1}^{\infty} {F}_{\alpha}^{\epsilon}(nd)(1-\psi_{b,c}(n))=
&\int_{b}^{\infty}{F}_{\alpha}^{\epsilon}(xd)(1-\psi_{b,c}(x))\, dx+
R_{s,b,c}^{\epsilon}(\alpha) \\
=&\int_{b}^{\infty}{F}_{\alpha}^{\epsilon}(xd)\, dx-
\int_{b}^{b+c}{F}_{\alpha}^{\epsilon}(xd)\psi_{b,c}(x)\, dx+
R_{s,b,c}^{\epsilon}(\alpha)  \, ,
\end{aligned}
\end{equation*}
and
{\small 
\begin{equation*}
{S}^{\epsilon}(\alpha)=\sum_{n=1}^{b-1}{F}_{\alpha}^{\epsilon}(nd)
+\sum_{n=b}^{b+c}{F}_{\alpha}^{\epsilon}(nd)\psi_{b,c}(n)
-\int_{b}^{b+c}{F}_{\alpha}^{\epsilon}(xd)\psi_{b,c}(x)\, dx
+\int_{b}^{\infty}{F}_{\alpha}^{\epsilon}(xd)\, dx
+R_{s,b,c}^{\epsilon}(\alpha)  \, .
\end{equation*}
}
The desired result now follows from Lemma \ref{contourlemma}.
\hfill $\square$
\end{proof}

Before turning to the algorithm itself, we will require two more results,
whose proofs are deferred to the next sections.

\begin{lemma} \label{convlemma}
The term $R_{s,b,c}^{\epsilon}(\alpha)$ from eq. \eqref{eq:err_formula}
satisfies the estimate
\[
\lim_{c\rightarrow\infty}
\underset{\epsilon\rightarrow 0^+}{\overline{\lim}}
\left|R_{s,b,c}^{\epsilon}(\alpha) \right|= \mathcal{O}(c^{-N-\beta})
\]
for any fixed $N$.
\end{lemma}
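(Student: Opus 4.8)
The plan is to replace the periodic Bernoulli factor in \eqref{eq:err_formula} by its Fourier series, use the hypothesis $\{(k\pm\alpha)d/2\pi\}\cap\mathbb Z=\varnothing$ to produce a nonvanishing oscillatory exponential in each resulting integral, and then integrate by parts repeatedly — each integration by parts gaining a full power of $c^{-1}$ because the relevant integrand ``lives at scale $c$''. Write $H(x)={F}_\alpha^\epsilon(xd)(1-\psi_{b,c}(x))$, set $\omega=(k+\alpha)d$ (which is nonzero by hypothesis), and use the representation $f(z)=\Phi_0(z)z^{-\beta}e^{iz}$ to factor $H(x)=\Lambda_\epsilon(x)\,e^{i\omega x}$ with $\Lambda_\epsilon(x)=\Phi_0((k+i\epsilon)xd)\,\big((k+i\epsilon)xd\big)^{-\beta}(1-\psi_{b,c}(x))\,e^{-\epsilon dx}$. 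Substituting $B_s(x-[x])=-\frac{s!}{(2\pi i)^s}\sum_{m\neq0}m^{-s}e^{2\pi imx}$ (uniformly convergent for $s\geq2$) into \eqref{eq:err_formula} and interchanging sum and integral gives
\[
R_{s,b,c}^\epsilon(\alpha)=-\frac{(-1)^{s-1}}{(2\pi i)^s}\sum_{m\neq0}\frac{1}{m^s}\int_b^\infty e^{i\mu_m x}\,\Omega_\epsilon(x)\,dx,\qquad \mu_m:=2\pi m+\omega,
\]
where, by the Leibniz rule, $\Omega_\epsilon=\sum_{j=0}^s\binom{s}{j}(i\omega)^{s-j}\Lambda_\epsilon^{(j)}$ is a fixed linear combination of derivatives of $\Lambda_\epsilon$, and $\mu_m\neq0$ for every $m\in\mathbb Z$ precisely because $(k+\alpha)d/2\pi\notin\mathbb Z$.

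The main work is an $L^1$ estimate on the derivatives of $\Lambda_\epsilon$: for each integer $M\geq1$, $\limsup_{\epsilon\to0^+}\int_b^\infty|\Lambda_\epsilon^{(M)}(x)|\,dx\le C_M\,c^{\,1-\beta-M}$ for all large $c$. To prove this I would apply Leibniz's rule to the triple product $\Lambda_\epsilon=A_\epsilon\cdot(1-\psi_{b,c})\cdot e^{-\epsilon dx}$ with $A_\epsilon(x)=\Phi_0((k+i\epsilon)xd)\,((k+i\epsilon)xd)^{-\beta}$. Since $\Phi_0$ is analytic on $|\arg z|<\pi/2+\delta$ with the stated asymptotic series there, the expansion may be differentiated termwise (Cauchy estimates), so $\Phi_0^{(i)}(z)=O(z^{-i-1})$ for $i\geq1$ and hence $|A_\epsilon^{(i)}(x)|\leq C_i\,x^{-\beta-i}$ uniformly for small $\epsilon$ and $x\geq x_0:=b+2rc$; also $1-\psi_{b,c}$ is supported in $[x_0,\infty)$, its derivatives of order $\geq1$ are supported in $[x_0,b+(1-2r)c]$ (length $\leq c$) with $\|(1-\psi_{b,c})^{(j)}\|_\infty\leq C_jc^{-j}$, and $(e^{-\epsilon dx})^{(\ell)}=(-\epsilon d)^\ell e^{-\epsilon dx}$. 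For a Leibniz term with $\ell=0$: if a derivative lands on $1-\psi_{b,c}$ the integral is over an interval of length $\leq c$ and is $\lesssim c\cdot c^{-j}\cdot x_0^{-\beta-i}\lesssim c^{\,1-\beta-M}$; otherwise it is $\lesssim\int_{x_0}^\infty x^{-\beta-M}\,dx\lesssim c^{\,1-\beta-M}$. A term with $\ell\geq1$ carries $(\epsilon d)^\ell$; even though it may be integrated over a region of length $\sim\epsilon^{-1}$, the substitution $u=\epsilon dx$ shows it is $O((\epsilon d)^{M+\beta-1})$ (or $O((\epsilon d)^\ell)$ if a $\psi$-derivative is also present), hence vanishes as $\epsilon\to0^+$. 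The same bound then follows for $\Omega_\epsilon^{(M)}$, with an $s$-dependent constant.

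To conclude, integrate by parts $M$ times in each integral above, always integrating the factor $e^{i\mu_m x}$: since $1-\psi_{b,c}$ and all its derivatives vanish on $[b,b+2rc]$ and $\Lambda_\epsilon$ together with its derivatives decays (exponentially for $\epsilon>0$) at $+\infty$, no boundary terms appear, so $\int_b^\infty e^{i\mu_m x}\Omega_\epsilon\,dx=(i\mu_m)^{-M}(-1)^M\int_b^\infty e^{i\mu_m x}\Omega_\epsilon^{(M)}\,dx$ and $\limsup_{\epsilon\to0^+}|\int_b^\infty e^{i\mu_m x}\Omega_\epsilon\,dx|\le C_{s,M}\,c^{\,1-\beta-M}|\mu_m|^{-M}$. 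The $\epsilon$-independent weight $|m|^{-s}|\mu_m|^{-M}$ is summable over $m\neq0$ because $|\mu_m|\geq\operatorname{dist}((k+\alpha)d,2\pi\mathbb Z)>0$ for every $m$ and $|\mu_m|\sim|m|$ for $|m|$ large; hence the $\limsup$ passes through the sum (reverse Fatou with this summable majorant), giving
\[
\limsup_{\epsilon\to0^+}\big|R_{s,b,c}^\epsilon(\alpha)\big|\ \le\ C_{s,M}\,c^{\,1-\beta-M}\sum_{m\neq0}\frac{1}{|m|^s|\mu_m|^M},
\]
and choosing $M=N+1$ yields $\limsup_{\epsilon\to0^+}|R_{s,b,c}^\epsilon(\alpha)|=\mathcal O(c^{-N-\beta})$.

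The hardest points are: (a) checking that the boundary terms in the repeated integration by parts truly vanish — this uses exactly the flatness of $\psi_{b,c}$ at $x=b$ built into Definition \ref{filtdef}; (b) the treatment of the $\ell\geq1$ Leibniz terms, where the gain $(\epsilon d)^\ell$ must be played against a domain growing like $\epsilon^{-1}$ — the only place the limit in $\epsilon$ does genuine work, and the reason the statement is phrased with $\limsup_{\epsilon\to0^+}$ rather than a uniform bound; and (c) the interchange of $\limsup_{\epsilon\to0^+}$ with the sum over $m$, which rests on the $\epsilon$-uniform summable majorant above. The rest is bookkeeping with Leibniz's rule and the sector estimates for $\Phi_0$.
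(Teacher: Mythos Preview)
Your argument is correct and takes a genuinely different route from the paper's. Both proofs begin by inserting the Fourier expansion of $B_s(x-[x])$, but then diverge. The paper expands the $s$-th derivative by Leibniz \emph{before} introducing the Fourier series, integrates by parts \emph{once} to produce the antiderivative $G_{\alpha,p,q}^\epsilon(x)=-\int_x^\infty f^{(p)}((k+i\epsilon)td)e^{i\alpha td}e^{2\pi iqt}\,dt$, then rotates the contour of integration to the steepest-descent ray to take $\epsilon\to0^+$ and to obtain an explicit asymptotic expansion $G_{\alpha,p,q}(x)\sim e^{i\lambda_q x}\sum_\rho G_{\rho,p,q}x^{-\rho-\beta}$; after rescaling $x=b+ct$ this reduces the estimate to the super-algebraic decay of $\int_r^{1-r}\psi^{(j+1)}(t)t^{-\rho-\omega-\beta}e^{i\lambda_q ct}\,dt$ for a smooth compactly supported amplitude. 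You instead keep the oscillation $e^{i\mu_m x}$ on the real line and integrate by parts $M$ times against it, relying on the $L^1$ bound $\limsup_{\epsilon\to0^+}\|\Lambda_\epsilon^{(M)}\|_{L^1}\lesssim c^{1-\beta-M}$, which you derive by direct Leibniz bookkeeping and the sector Cauchy estimates for $\Phi_0$.

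Your approach is more elementary --- no contour deformation, no asymptotic expansion, just non-stationary phase --- and it suffices for Lemma~\ref{convlemma} on its own. What the paper's longer argument buys is the explicit limit \eqref{eq:lim_G_eps} for $G_{\alpha,p,q}(x)$ and its asymptotic series, which are reused verbatim in the proof of Lemma~\ref{mixlemma} and in the exact formula \eqref{eq:exact_sum_eq}; they are also what makes the Wood-anomaly blow-up rates in \S6.1 readable directly from the analysis. So your route is cleaner for this lemma in isolation, while the paper's route does double duty. One small clean-up: you don't actually need reverse Fatou to pass the $\limsup$ through the sum over $m$ --- since your bound factors as $|\mu_m|^{-M}\|\Omega_\epsilon^{(M)}\|_{L^1}$ with the $m$-dependence entirely in the first factor, you can simply bound $|R_{s,b,c}^\epsilon(\alpha)|\le C\|\Omega_\epsilon^{(M)}\|_{L^1}\sum_{m\neq0}|m|^{-s}|\mu_m|^{-M}$ and take $\limsup_{\epsilon\to0^+}$ of the scalar $\|\Omega_\epsilon^{(M)}\|_{L^1}$.
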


\begin{lemma} \label{mixlemma}
Recall that ${F}_{\alpha}^{\epsilon}(z) = f((k+i\epsilon)z)e^{i \alpha z}$
and for $p\in \mathbb{N}\cup \left\{ 0 \right\}$ and
$q\in\mathbb{Z}$ let
\begin{equation}
\label{Galphapqdef}
G_{\alpha,p,q}^{\epsilon}(x)=
- \int_{x}^{\infty} f^{(p)}((k+i\epsilon)td) e^{i\alpha td}
e^{2\pi i q t} \,dt \, .
\end{equation}
Then $G_{\alpha,p,q}(x) \equiv
\lim_{\epsilon \rightarrow 0^+} G_{\alpha,p,q}^\epsilon (x)$ exists 
(see formula (\ref{eq:lim_G_eps})) and
\begin{equation}
\begin{aligned}
\lim_{c\rightarrow\infty}
\limsup_{\epsilon\rightarrow 0^+} &\left|
\sum_{n=b}^{b+c}{F}_{\alpha}^{\epsilon}(nd)\psi_{b,c}(n)
-\int_{b}^{b+c}{F}_{\alpha}^{\epsilon}(xd)\psi_{b,c}(x)dx
- \mathcal{A} \right|=0
\end{aligned}
\end{equation} 
where
\begin{equation}
\begin{aligned}
& \mathcal{A} =
{F}_{\alpha} (bd)
-\sum_{\gamma=1}^{s} \frac{(-1)^{\gamma}B_{\gamma}}{\gamma!} \left.
\frac{d^{\gamma-1}}{dx^{\gamma-1}}
\left({F}_{\alpha} (xd)\right)\right|_{x=b} \\ 
& - \frac{(-1)^{s-1}}{s!} A_s \sum_{j_1=0}^s \binom{s}{j_1} d^s
k^{s-j_1} (i\alpha)^{j_1} \left[ \sum_{q \in \mathbb{Z}\backslash
\left\{0\right\}} q^{-s} G_{\alpha,s-j_1,q} (b) \right] \, ,
\end{aligned}
\end{equation}
where 
$A_{s}=-s!/(2\pi i)^{s}$ (see \cite{AbramowitzStegun1972}, 23.1.16).
\end{lemma}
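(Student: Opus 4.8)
The plan is to run the Euler--Maclaurin machinery of the preceding theorem on the filtered summand, expand the periodic Bernoulli function in its Fourier series, and reduce the whole identity to a uniform (in $\epsilon$) oscillatory-integral estimate exploiting the non-resonance hypothesis $\{(k\pm\alpha)d/2\pi\}\cap\mathbb{Z}=\varnothing$. First I would apply Euler--Maclaurin, exactly as in the previous theorem, to $h(x)=F_\alpha^\epsilon(xd)\psi_{b,c}(x)$ on $[b,b+c]$ (summing from $n=b+1$ to $n=b+c$ and adding the $n=b$ term by hand). Because $\psi$ is flat at both ends of its transition region ($\psi\equiv 1$ near $b$, $\psi\equiv 0$ near $b+c$, with all derivatives vanishing), the boundary contributions at $b+c$ vanish and those at $b$ collapse to derivatives of $F_\alpha^\epsilon(xd)$, giving
\[
\sum_{n=b}^{b+c}F_\alpha^\epsilon(nd)\psi_{b,c}(n)-\int_b^{b+c}F_\alpha^\epsilon(xd)\psi_{b,c}(x)\,dx=F_\alpha^\epsilon(bd)-\sum_{\gamma=1}^s\frac{(-1)^\gamma B_\gamma}{\gamma!}\left.\frac{d^{\gamma-1}}{dx^{\gamma-1}}F_\alpha^\epsilon(xd)\right|_{x=b}+\mathcal{R}_c^\epsilon,
\]
with $\mathcal{R}_c^\epsilon=\tfrac{(-1)^{s-1}}{s!}\int_b^{b+c}B_s(x-[x])\tfrac{d^s}{dx^s}\bigl(F_\alpha^\epsilon(xd)\psi_{b,c}(x)\bigr)\,dx$. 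The first two terms converge as $\epsilon\to 0^+$ to the first two terms of $\mathcal{A}$, so everything reduces to showing $\lim_{c\to\infty}\limsup_{\epsilon\to 0^+}|\mathcal{R}_c^\epsilon-\mathcal{R}_\infty|=0$, where $\mathcal{R}_\infty$ is the third term of $\mathcal{A}$.

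Next I would discard the part of $\mathcal{R}_c^\epsilon$ in which a derivative falls on $\psi_{b,c}$: writing $\tfrac{d^s}{dx^s}(F_\alpha^\epsilon(xd)\psi_{b,c}(x))=\bigl(\tfrac{d^s}{dx^s}F_\alpha^\epsilon(xd)\bigr)\psi_{b,c}(x)+E(x)$, where $E$ collects the Leibniz terms carrying at least one derivative of $\psi_{b,c}$, the bounds $|\psi_{b,c}^{(l)}|\lesssim c^{-l}$ (supported on a transition region of length $\le c$ lying above $b+2rc$) together with $|\tfrac{d^{m}}{dx^{m}}F_\alpha^\epsilon(xd)|\lesssim c^{-\beta}$ there, uniformly in $\epsilon\in(0,1]$, give $\int_b^{b+c}|B_s(x-[x])E(x)|\,dx=O(c^{-\beta})$. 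For the main piece I would expand $\tfrac{d^s}{dx^s}F_\alpha^\epsilon(xd)=d^s\sum_{j=0}^s\binom sj(k+i\epsilon)^{s-j}(i\alpha)^jf^{(s-j)}((k+i\epsilon)xd)e^{i\alpha xd}$ and insert the classical Fourier series $B_s(x-[x])=A_s\sum_{q\ne 0}q^{-s}e^{2\pi iqx}$, absolutely convergent for $s\ge 2$ (the coefficient $A_s=-s!/(2\pi i)^s$ being \cite{AbramowitzStegun1972}, 23.1.16), the interchange with the integral over the compact interval $[b,b+c]$ being routine. Since $\sum_j\binom sj d^sk^{s-j}(i\alpha)^jf^{(s-j)}(kxd)e^{i\alpha xd}=\tfrac{d^s}{dx^s}F_\alpha(xd)$ and $-\int_b^\infty f^{(s-j)}(ktd)e^{i\alpha td}e^{2\pi iqt}\,dt=G_{\alpha,s-j,q}(b)$, the stated $\mathcal{R}_\infty$ is exactly what one obtains once one knows that for each $p\in\{0,\dots,s\}$
\[
\lim_{c\to\infty}\limsup_{\epsilon\to 0^+}\Bigl|\sum_{q\ne 0}q^{-s}\Bigl(\int_b^{b+c}e^{2\pi iqx}f^{(p)}((k+i\epsilon)xd)e^{i\alpha xd}\psi_{b,c}(x)\,dx+G_{\alpha,p,q}(b)\Bigr)\Bigr|=0 .
\]

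This last estimate is the heart of the matter, and is where I expect the main difficulty. I would split the $q$-summand into a cutoff error (replacing $\int_b^{b+c}\!\cdots\psi_{b,c}$ by $\int_b^\infty\!\cdots$ at fixed $\epsilon$) plus an $\epsilon$-limit error $G_{\alpha,p,q}(b)-G_{\alpha,p,q}^\epsilon(b)$. Writing $f^{(p)}((k+i\epsilon)xd)e^{i\alpha xd}e^{2\pi iqx}=e^{i\omega_qx}A_\epsilon(x)$ with $\omega_q=(k+\alpha)d+2\pi q$ and $A_\epsilon(x)=\Phi_p((k+i\epsilon)xd)((k+i\epsilon)xd)^{-\beta}e^{-\epsilon xd}$, the non-resonance hypothesis forces $\omega_q\ne 0$ for every $q\in\mathbb{Z}$ and $\sum_{q\ne 0}|q|^{-s}/|\omega_q|<\infty$. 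One integration by parts (the endpoint terms vanishing because $1-\psi_{b,c}$ is flat at $b+2rc$ and $A_\epsilon$ decays at $\infty$) bounds the cutoff error by $|\omega_q|^{-1}\bigl(\int_{b+2rc}^\infty|A_\epsilon'|+\int|A_\epsilon\psi_{b,c}'|\bigr)$; the crucial ingredient is $\Phi_p'(z)=O(1/z)$ — which follows from analyticity and boundedness of $\Phi_p$ on the slightly larger sector $|\arg z|<\pi/2+\delta$ via a Cauchy estimate — giving $|A_\epsilon'(x)|\lesssim x^{-\beta-1}e^{-\epsilon xd}+\epsilon x^{-\beta}e^{-\epsilon xd}$, hence $\int_{b+2rc}^\infty|A_\epsilon'|\le Cc^{-\beta}+\eta_c(\epsilon)$ with $\eta_c(\epsilon)\to 0$ as $\epsilon\to 0^+$ for each fixed $c$; together with the transition-region bound $\int|A_\epsilon\psi_{b,c}'|\lesssim c^{-\beta}$ this gives a cutoff-error bound $\lesssim (c^{-\beta}+\eta_c(\epsilon))/|\omega_q|$, summable against $|q|^{-s}$, whose $\limsup_{\epsilon\to 0^+}$ is of order $c^{-\beta}$. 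The $\epsilon$-limit error is controlled by dominated convergence in $q$: the same integration by parts gives $|G_{\alpha,p,q}^\epsilon(b)|,|G_{\alpha,p,q}(b)|\lesssim 1/|\omega_q|$ uniformly in $\epsilon$, while for each fixed $q$ a Cauchy argument on the oscillatory tail (equivalently, the contour/integration-by-parts reasoning of Lemma~\ref{contourlemma}, which also produces the explicit formula \eqref{eq:lim_G_eps}) shows $G_{\alpha,p,q}^\epsilon(b)\to G_{\alpha,p,q}(b)$ — establishing en passant the existence asserted in the first sentence of the lemma. Letting $\epsilon\to 0^+$ and then $c\to\infty$ and combining the three contributions (the $O(c^{-\beta})$ filter-derivative term, the $O(c^{-\beta})$ cutoff error, the vanishing $\epsilon$-limit error) yields the claim, with the limit equal to the third term of $\mathcal{A}$. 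The obstacle throughout is exactly this interplay between the $\epsilon\to 0^+$ limit and the slowly decaying ($x^{-\beta}$ with $\beta$ possibly less than $1$) oscillatory tails, which is precisely what the bound $\Phi_p'=O(1/z)$ and the non-resonance condition are there to tame.
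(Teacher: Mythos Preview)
Your argument is correct and shares the Euler--Maclaurin/Fourier-series scaffolding with the paper, but the handling of the Bernoulli remainder is genuinely different. After expanding the $s$th derivative by Leibniz, the paper keeps \emph{all} terms $\mathcal{E}_{\alpha,b,c,p,j}^\epsilon$ (including those with $j\ge 1$ derivatives on $\psi_{b,c}$), integrates each by parts once using $G_{\alpha,p,q}^\epsilon$ itself as antiderivative, and observes that the resulting integral $\int_b^{b+c}\psi_{b,c}^{(j+1)}(x)\,G_{\alpha,p,q}^\epsilon(x)\,dx$ is \emph{exactly} the quantity $\mathcal{R}_{\alpha,b,c,p,j}^\epsilon$ already shown in the proof of Lemma~\ref{convlemma} to be $O(c^{-j-N-\beta})$ for every $N$; the boundary term at $x=b$ survives only when $j=0$ and produces the third line of $\mathcal{A}$ on the nose. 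You instead discard the $j\ge 1$ Leibniz terms by the crude size bound $O(c^{-\beta})$ and, for the $j=0$ piece, split into a cutoff error and an $\epsilon$-limit error, integrating by parts against $e^{i\omega_q x}/(i\omega_q)$ rather than against the exact antiderivative $G^\epsilon$, and invoking the Cauchy-estimate $\Phi_p'(z)=O(1/z)$ to control $A_\epsilon'$. Your route is more self-contained---it needs neither the asymptotic expansion of $G_{\alpha,p,q}$ developed in Section~\ref{sec-estimates1} nor the super-algebraic bound on $\mathcal{R}^\epsilon$---but it yields only $O(c^{-\beta})$ decay; the paper's route is shorter because it recycles Lemma~\ref{convlemma} wholesale and, as a by-product, inherits the super-algebraic rate.
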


\section{Proof of Lemma \ref{convlemma}} \label{sec-estimates1}

For Lemma \ref{convlemma},
we need to estimate the remainder
$R_{s,b,c} ^\epsilon(\alpha)$, defined via (\ref{eq:err_formula}).
This is precisely the error incurred by using the
Euler-Maclaurin formula.

We begin by using the generalized product rule
to expand the derivatives in (\ref{eq:err_formula}):
\begin{equation*}
\begin{aligned}
R_{s,b,c}^{\epsilon}(\alpha)=
&\frac{(-1)^{s-1}}{s!}
\int_{b}^{\infty} B_s(x-[x])\left[\left.\frac{d^s}{dt^s}\left(
{F}_{\alpha}^{\epsilon}(td)(1-\psi_{b,c}(t))
\right)\right|_{t=x}\right]\,dx\\
=&\frac{(-1)^{s-1}}{s!} \sum_{j=0}^{s} \sum_{j_1 =0}^{s-j} \binom{s}{j}
\binom{s-j}{j_1} d^{s-j} (k+i\epsilon)^{s-j-j_1} (i\alpha)^{j_1}\\
&\times\int_{b}^{\infty} B_{s}(x-[x]) (\delta_{j,0}-\psi_{b,c}^{(j)}(x))
f^{(s-j-j_1)}((k+i\epsilon)xd)e^{i\alpha xd} \,dx \, .
\end{aligned}
\end{equation*}
We see that it suffices to verify the smallness of finitely many terms of
the following form:
\begin{equation}
\begin{aligned}
\mathcal{R}_{\alpha,b,c,p,j}^\epsilon=\int_{b}^{\infty}
B_{s}(x-[x])(\delta_{j,0}-\psi_{b,c}^{(j)}(x))f^{(p)}((k+i\epsilon)xd)
e^{i\alpha xd}\, dx \, .
\end{aligned}
\end{equation}
The functions $B_{s}(x-[x])$ are Bernoulli polynomials, and for
$s \geq 2$ we have the absolutely convergent Fourier expansion:
\begin{equation*}
B_{s}(x-[x])=A_{s}\cdot\sum_{q\in\mathbb{Z}\backslash\{0\} }
q^{-s} e^{2\pi i q x} \, ,
\end{equation*}
with $A_{s}=(-1)s!/(2\pi i)^{s}$ (see \cite{AbramowitzStegun1972}, 23.1.16).
We shall assume henceforth that $s\geq 2$ to guarantee convergence; then by Fubini's theorem, we have
\begin{equation*}
\mathcal{R}_{\alpha,b,c,p,j}^{\epsilon} =A_{s}
\sum_{q \in\mathbb{Z}\backslash\{0\}}
q^{-s} \int_{b}^{\infty} (\delta_{j,0}-\psi_{b,c}^{(j)}(x))
f^{(p)}((k+i\epsilon)xd) e^{i\alpha xd} e^{2\pi i q x} \,dx \, .
\end{equation*}
Let us define (for $x>0$)
\begin{equation}
G_{\alpha,p,q}^{\epsilon}(x)=
- \int_{x}^{\infty} f^{(p)}((k+i\epsilon)td) e^{i\alpha td}
e^{2\pi i q t} \,dt \, .
\end{equation}
Integrating by parts, we have
\begin{equation*}
\begin{aligned}
\mathcal{R}_{\alpha,b,c,p,j}^{\epsilon} =
&A_{s} \sum_{q\in\mathbb{Z}\backslash\{0\}} q^{-s}
\int_{b}^{\infty} \psi_{b,c}^{(j+1)}(x) G_{\alpha,p,q}^{\epsilon}(x) \,dx \\
=&A_{s} \sum_{q\in\mathbb{Z}\backslash\{0\}} q^{-s}
\int_{b}^{b+c} \psi_{b,c}^{(j+1)}(x) G_{\alpha,p,q}^{\epsilon}(x) \,dx \\
=&A_{s} c^{-j} \sum_{q\in\mathbb{Z}\backslash\{0\}} q^{-s}
\int_{0}^{1} \psi^{(j+1)}(x) G_{\alpha,p,q}^{\epsilon}(b+cx) \,dx \, .
\end{aligned}
\end{equation*}

Let $\lambda_q = (\alpha+k)d+2\pi q$, and let
$\psi\in(-\pi/2,\pi/2)$ such that $\tan\psi=\lambda_q ( d \epsilon )^{-1}$.
Since we assumed that $(k \pm \alpha)d/2\pi\notin\mathbb{Z}$, we have that
$\lambda_q\neq 0$ for any $q\in\mathbb{Z}$; indeed, it holds that
$\inf_{q\in\mathbb{Z}} |\lambda_q| > 0$.
Let $C_{0}$, $C_{\psi}$ be contours in the $z$-plane along the outward rays
$\arg z=0$ and $\arg z=\psi$. We have
\begin{equation*}
\begin{aligned}
G_{\alpha,p,q}^{\epsilon}(x)=
&-\int_{x}^{\infty} f^{(p)}((k+i\epsilon)td) e^{i\alpha td}
e^{2\pi i q t} \,dt\\
=& -\int_{C_{0}}\frac{\Phi_p((k+i\epsilon)(x+z)d)}
{[(k+i\epsilon)(x+z)d]^{\beta}}
e^{i(\lambda_q +i d \epsilon)(x+z)} \,dz\\
=& -e^{i(\lambda_q +i d \epsilon )x}
\int_{C_{0}} \frac{\Phi_p((k+i\epsilon)(x+z)d)}{[(k+i\epsilon)(x+z)d]^{\beta}}
e^{i(\lambda_q + i d \epsilon )z} \,dz\\
\end{aligned}
\end{equation*}
We now deform the contour $C_0$ into $C_\psi$, which will be justified momentarily.
\begin{equation*}
\begin{aligned}
G_{\alpha,p,q}^\epsilon (x) 
=& -e^{i(\lambda_q +i d \epsilon )x}
\int_{C_{\psi}} \frac{\Phi_p((k+i\epsilon)(x+z)d)}
{[(k+i\epsilon)(x+z)d]^{\beta}}
e^{i(\lambda_q +i d \epsilon )z} \,dz\\
=&-e^{i(\lambda_q +i d \epsilon)x}
\int_{0}^{\infty}\frac{\Phi_p((k+i\epsilon)(x+e^{i\psi}\tau)d)}
{[(k+i\epsilon)(x+e^{i\psi}\tau)d]^{\beta}}
e^{i(\lambda_q +i d\epsilon)(\cos\psi+i\sin\psi)\tau} e^{i\psi} \,d\tau\\
=&-e^{i(\lambda_q +i d \epsilon )x}
\int_{0}^{\infty}\frac{\Phi_p((k+i\epsilon)(x+e^{i\psi}\tau)d)}
{[(k+i\epsilon)(x+e^{i\psi}\tau)d]^{\beta}}
e^{-\tau\sqrt{\lambda_q^2+d^2 \epsilon^2}} e^{i\psi} \,d\tau \, .
\end{aligned}
\end{equation*}
This implies that $G_{\alpha,p,q}^{\epsilon}(x)$ is bounded on  $[1,\infty)$ 
uniformly as $\epsilon\rightarrow 0^{+}$, $|q|\rightarrow\infty$, 
moreover, by the dominated convergence theorem
(with $\eta=\operatorname{sgn}\lambda_q$),
\begin{equation}
\label{eq:lim_G_eps}
\lim_{\epsilon\rightarrow 0^+} G_{\alpha,p,q}^{\epsilon}(x)
=-e^{i\lambda_q x} i \eta
\int_{0}^{\infty}\frac{\Phi_p((x+i\eta\tau)kd)}{[(x+i\eta\tau)kd]^{\beta}}
e^{-|\lambda_q | \tau} \, d\tau \, .
\end{equation}

The contour deformation in the above calculation, which takes place at
some \emph{fixed} $\epsilon > 0$, is justified by the following
estimate, where $B(R)$ and $I$ are defined as on 
the proof of Lemma \ref{contourlemma}.
\begin{equation*}
\begin{aligned}
& \  \left|\int_{B(R)}
 \frac{\Phi_p((k+i\epsilon)(x+z)d)}{[(k+i\epsilon)(x+z)d]^{\beta}} 
e^{i(\lambda_q +i d \epsilon)z} dz\right| \hspace{1.3in} \\
=& \ \left|\int_I
\frac{\Phi_p((k+i\epsilon)(x+R e^{i\psi^\prime })d)}
{[(k+i\epsilon)(x+R e^{i\psi^\prime} )d]^{\beta}}
e^{i R (\lambda_q +i d\epsilon )
(\cos\psi^\prime+i \sin\psi^\prime)}
R i e^{i\psi^\prime }
d\psi^\prime \right|\\
\leq& \  C R (|k|xd)^{-\beta} \int_I
e^{-|\lambda_q | R \sin |\psi^\prime|}
e^{-\epsilon d R \cos\psi^\prime}
d\psi^\prime  \\
\leq& \  C (|k|xd)^{-\beta} \cdot R e^{-\epsilon d R \cos\psi}
\longrightarrow 0 \textnormal{ as } R\rightarrow\infty \, .
\end{aligned}
\end{equation*}

Let us define $G_{\alpha,p,q}(x)=\lim_{\epsilon\rightarrow 0^+}
G_{\alpha,p,q}^{\epsilon}(x)$; we desire an asymptotic approximation
to this function as $x\rightarrow\infty$. Recall that we have an asymptotic
approximation 
$\Phi_p (z)\sim\sum_{\mu=0}^{\infty}c_{p,\mu} z^{-\mu}$,
$|z|\rightarrow\infty$. Thus, for any \emph{fixed} $N\geq 1$, we have
\begin{equation*}
\begin{aligned}
G_{\alpha,p,q}(x)
=&-e^{i\lambda_q x} i \eta
\int_{0}^{\infty}\frac{\Phi_p((x+i\eta\tau)kd)}{[(x+i\eta\tau)kd]^{\beta}}
e^{-|\lambda_q | \tau} \,d\tau\\
=&-e^{i\lambda_q x} i \eta
\left(\sum_{\mu=0}^{N}\frac{c_{p,\mu}}{(kd)^{\mu+\beta}}
\int_{0}^{\infty}
\frac{e^{-|\lambda_q | \tau}}{(x+i\eta\tau)^{\mu+\beta}} \,d\tau
+ \mathcal{O}(x^{-N-\beta})\right) \, ,\\ 
\end{aligned}
\end{equation*}
Now we integrate by parts iteratively, finitely many times, until the error
term is $\mathcal{O} (x^{-N-\beta})$.
\begin{equation*}
\begin{aligned}
G_{\alpha,p,q} (x) =&
-e^{i\lambda_q x} i \eta
\left(\sum_{\mu=0}^{N}\sum_{\nu=0}^{N-\mu}
\frac{c_{p,\mu}(i\eta)^{\nu} \prod_{0\leq i_1 <\nu}(-\mu-\beta-i_1)}
{(kd)^{\mu+\beta}|\lambda_q |^{\nu+1}}
x^{-(\mu+\nu)-\beta}
+ \mathcal{O}(x^{-N-\beta})\right)\\
=&-e^{i\lambda_q x} i \eta
\left(\sum_{\rho=0}^{N}\sum_{\mu=0}^{\rho}
\frac{c_{p,\mu}(i\eta)^{\rho-\mu} \prod_{0\leq i_1 <(\rho-\mu)}
(-\mu-\beta-i_1)}
{(kd)^{\mu+\beta}|\lambda_q |^{\rho-\mu+1}}
x^{-\rho-\beta}
+ \mathcal{O}(x^{-N-\beta})\right)\\
=&-e^{i\lambda_q x} i \eta
\left(\sum_{\rho=0}^{N-1}
\left[\sum_{\mu=0}^{\rho}
\frac{c_{p,\mu}(i\eta)^{\rho-\mu} \prod_{0\leq i_1<(\rho-\mu)}
(-\mu-\beta-i_1)}
{(kd)^{\mu+\beta}|\lambda_q |^{\rho-\mu+1}}\right]
x^{-\rho-\beta}
+ \mathcal{O}(x^{-N-\beta})\right) \, .
\end{aligned}
\end{equation*}
In particular, we find that
$G_{\alpha,p,q}(x)\sim e^{i\lambda_q x}
\sum_{\rho=0}^{\infty} G_{\rho,p,q} x^{-\rho-\beta}$; moreover, since
$k$ and $d$ are fixed, we have an estimate
$|G_{\rho,p,q}|\leq C_{\rho,p}\cdot(1+|\lambda_q |^{-\rho-1})$.
Since $\left\{(k \pm \alpha)d/2\pi\right\}\cap\mathbb{Z}=\varnothing$, we have that
$\inf_q |\lambda_q|>0$, so for fixed $\rho$,
$G_{\rho,p,q}$ is uniformly bounded in $q$. Additionally, the
error estimate $\mathcal{O}(x^{-N-\beta})$ is uniform in $q$, in that
both the constant and the domain of validity are uniform in $q$, if
$N$ is held constant.
(This relies on $\inf_q |\lambda_q|>0$.)

Recall that
\begin{equation*}
\mathcal{R}_{\alpha,b,c,p,j}^{\epsilon}
=A_{s} c^{-j} \sum_{q \in\mathbb{Z}\backslash\{0\}} q^{-s}
\int_{0}^{1} \psi^{(j+1)}(x) G_{\alpha,p,q}^{\epsilon}(b+cx) \,dx \, .
\end{equation*}
Since $G_{\alpha,p,q}^{\epsilon}(x)$ is uniformly bounded in the
simultaneous limit $\epsilon\rightarrow 0^+$, $|q|\rightarrow\infty$,
we may use the dominated convergence theorem to commute the limit
$\epsilon\rightarrow 0^+$ with the integral, as well as the sum
over $q$:
\begin{equation*}
\lim_{\epsilon\rightarrow 0^+}\mathcal{R}_{\alpha,b,c,p,j}^{\epsilon}
=A_{s} c^{-j} \sum_{q\in\mathbb{Z}\backslash\{0\}} q^{-s}
\int_{0}^{1} \psi^{(j+1)}(x) G_{\alpha,p,q}(b+cx) \, dx\, .
\end{equation*}
By using
$G_{\alpha,p,q}(x) = e^{i\lambda_q x}
\sum_{\rho=0}^{N-1} G_{\rho,p,q} x^{-\rho-\beta} + 
\mathcal{O}(x^{-N-\beta})$ 
(the error estimate being uniform in $q$),
and the fact that $\forall j\geq 0$, $\psi^{(j+1)}(x)$ is 
\emph{identically zero} on $[0,2r]$, we have (in the limit $c\rightarrow\infty$)
\begin{equation*}
\begin{aligned}
&\lim_{\epsilon\rightarrow 0^+}\mathcal{R}_{\alpha,b,c,p,j}^{\epsilon}\\
=&A_{s} c^{-j}
\sum_{\rho=0}^{N-1}
\sum_{q \in\mathbb{Z}\backslash\{0\}}
\frac{G_{\rho,p,q}e^{i\lambda_q b}}{q^s}
\int_{r}^{1-r} \psi^{(j+1)}(x)
\frac{e^{i\lambda_q cx}}{(b+cx)^{\rho+\beta}} \,dx
+ \mathcal{O}(c^{-j-N-\beta})\\
=&A_{s} c^{-j}
\sum_{\rho=0}^{N-1}
\sum_{q \in\mathbb{Z}\backslash\{0\}}
\frac{G_{\rho,p,q}e^{i\lambda_q b}}{c^{\rho+\beta} q^{s}}
\int_{r}^{1-r} \frac{\psi^{(j+1)}(x)}{x^{\rho+\beta}}
e^{i\lambda_q cx}\left(1+\frac{b}{cx}\right)^{-\rho-\beta} \,dx
+ \mathcal{O}(c^{-j-N-\beta})\\
\end{aligned}
\end{equation*}
Since $c$ is large and $x \geq r$, we may expand
$(1+b/cx)^{-\rho-\beta}$ in powers of $(b/cx)$.
\begin{equation*}
\begin{aligned}
&\lim_{\epsilon\rightarrow 0^+}\mathcal{R}_{\alpha,b,c,p,j}^{\epsilon} \\
=&A_{s}
\sum_{\rho=0}^{N-1} \sum_{\omega=0}^{N-\rho-1}\left[
(-1)^\omega
\frac{\prod_{i_1=0}^{\omega-1}(\rho+i_1+\beta)}
{\omega! b^{-\omega} c^{j+\rho+\omega+\beta}}
\sum_{q \in\mathbb{Z}\backslash\{0\}}
\frac{G_{\rho,p,q }e^{i\lambda_q b}}{q^{s}}
\int_{r}^{1-r} \frac{\psi^{(j+1)}(x)}{x^{\rho+\omega+\beta}}
e^{i\lambda_q cx} \,dx\right]\\
&+ \mathcal{O}(c^{-j-N-\beta}) \, .
\end{aligned}
\end{equation*}
Note that the integral on the right hand side depends on $c$ only through
$e^{i\lambda_q cx}$, and $\psi^{(j+1)}(x)/x^{\rho+\omega+\beta}$ is
smooth and compactly supported on $[r,1-r]$. Thus, the integral decays faster
than any power of $c^{-1}$ as $c\rightarrow\infty$, and such decay is uniform
in $q$. 

Thus, as $c\rightarrow\infty$, for any fixed $N$, we have
\[ \lim_{\epsilon\rightarrow 0^+}\mathcal{R}_{\alpha,b,c,p,j}^{\epsilon} 
= \mathcal{O}(c^{-j-N-\beta}),
\]
yielding the desired result.
\hfill $\square$

\section{Proof of Lemma \ref{mixlemma}} \label{sec-estimates2}

For Lemma \ref{mixlemma} we employ the 
Euler-Maclaurin formula yet again.

\begin{equation*}
\begin{aligned}
&\sum_{n=b}^{b+c} {F}_{\alpha}^{\epsilon}(nd)\psi_{b,c}(n) -
\int_{b}^{b+c} {F}_{\alpha}^{\epsilon}(xd)\psi_{b,c}(x) \,dx \\
=&{F}_{\alpha}^{\epsilon}(bd)
+\sum_{\gamma=1}^{s} \frac{(-1)^{\gamma}B_{\gamma}}{\gamma!}\left[
\left.\frac{d^{\gamma-1}}{dx^{\gamma-1}}\left({F}_{\alpha}^{\epsilon}(xd)
\psi_{b,c}(x)\right)\right|_{x=b+c}
- \left.
\frac{d^{\gamma-1}}{dx^{\gamma-1}}
\left({F}_{\alpha}^{\epsilon}(xd)\psi_{b,c}(x)\right)\right|_{x=b}
\right]\\
&+\frac{(-1)^{s-1}}{s!} \int_{b}^{b+c} B_{s}(x-[x])\left[\left.\frac{d^s}{dt^s}
\left({F}_{\alpha}^{\epsilon}(td)\psi_{b,c}(t)\right)
\right|_{t=x}\right]\,dx \, .
\end{aligned}
\end{equation*}
Since $\psi_{b,c}(x)$ is identically zero in a neighborhood of $b+c$ and
identically one in a neighborhood of $b$, this reduces to
\begin{equation*}
\begin{aligned}
&\sum_{n=b}^{b+c} {F}_{\alpha}^{\epsilon}(nd)\psi_{b,c}(n) -
\int_{b}^{b+c} {F}_{\alpha}^{\epsilon}(xd)\psi_{b,c}(x) \,dx \\
=&{F}_{\alpha}^{\epsilon}(bd)
-\sum_{\gamma=1}^{s} \frac{(-1)^{\gamma}B_{\gamma}}{\gamma!} \left.
\frac{d^{\gamma-1}}{dx^{\gamma-1}}
\left({F}_{\alpha}^{\epsilon}(xd)\right)\right|_{x=b} \\
&+\frac{(-1)^{s-1}}{s!} \int_{b}^{b+c} B_{s}(x-[x])\left[\left.\frac{d^s}{dt^s}
\left({F}_{\alpha}^{\epsilon}(td)\psi_{b,c}(t)\right)
\right|_{t=x}\right]\, dx \, .
\end{aligned}
\end{equation*}
We expand the derivatives as before and define
\begin{equation}
\begin{aligned}
\mathcal{E}_{\alpha,b,c,p,j}^{\epsilon} =
\int_{b}^{b+c}
B_{s}(x-[x])\psi_{b,c}^{(j)}(x)f^{(p)}((k+i\epsilon)xd)
e^{i\alpha xd}\, dx \, ,
\end{aligned}
\end{equation}
\begin{equation}
\begin{aligned}
E_{\alpha,b,c}^{\epsilon}=
&\frac{(-1)^{s-1}}{s!}
\sum_{j=0}^{s} \sum_{j_1=0}^{s-j}\binom{s}{j} \binom{s-j}{j_1} d^{s-j}
(k+i\epsilon)^{s-j-j_1} (i\alpha)^{j_1}
\mathcal{E}_{\alpha,b,c,s-j-j_1,j}^{\epsilon} \, .
\end{aligned}
\end{equation}
Then
\begin{equation}
\label{eq:diff_formula}
\begin{aligned}
&\sum_{n=b}^{b+c} {F}_{\alpha}^{\epsilon}(nd)\psi_{b,c}(n) -
\int_{b}^{b+c} {F}_{\alpha}^{\epsilon}(xd)\psi_{b,c}(x) dx \\
=&{F}_{\alpha}^{\epsilon}(bd)
-\sum_{\gamma=1}^{s} \frac{(-1)^{\gamma}B_{\gamma}}{\gamma!} \left.
\frac{d^{\gamma-1}}{dx^{\gamma-1}}
\left({F}_{\alpha}^{\epsilon}(xd)\right)\right|_{x=b} 
+E_{\alpha,b,c}^{\epsilon} \, .
\end{aligned}
\end{equation}
The limit of each term as $\epsilon\rightarrow 0^+$ exists
(trivially for the first two terms, by the dominated convergence theorem for
the third):
\begin{equation*}
\begin{aligned}
&\lim_{\epsilon\rightarrow 0^+}
\left[\sum_{n=b}^{b+c} {F}_{\alpha}^{\epsilon}(nd)\psi_{b,c}(n) -
\int_{b}^{b+c} {F}_{\alpha}^{\epsilon}(xd)\psi_{b,c}(x) dx\right] \\
=&{F}_{\alpha}(bd)
-\sum_{\gamma=1}^{s} \frac{(-1)^{\gamma}B_{\gamma}}{\gamma!} \left.
\frac{d^{\gamma-1}}{dx^{\gamma-1}}
\left({F}_{\alpha}(xd)\right)\right|_{x=b} 
+\lim_{\epsilon \rightarrow 0^+} E_{\alpha,b,c}^{\epsilon} \, .
\end{aligned}
\end{equation*}
Notice that the first two terms are fully independent of $c$; to complete
the argument, it suffices to show that
$\lim_{c\rightarrow\infty}\lim_{\epsilon\rightarrow 0^+}
\mathcal{E}_{\alpha,b,c,p,j}^{\epsilon}$
exists (we \emph{do not} claim a limit of zero). Moreover, the
$c\rightarrow\infty$ convergence behavior of the desired limit is fully
determined by the $c\rightarrow\infty$ convergence behavior of
$\lim_{\epsilon\rightarrow 0^+}\mathcal{E}_{\alpha,b,c,p,j}^{\epsilon}$.

As before, we replace $B_{s}(x-[x])$ by its absolutely convergent
Fourier expansion ($s\geq2$) and use Fubini's theorem:
\begin{equation}
\label{eq:diff_formula_2}
\begin{aligned}
\mathcal{E}_{\alpha,b,c,p,j}^{\epsilon} =&
\int_{b}^{b+c}
B_{s}(x-[x])\psi_{b,c}^{(j)}(x)f^{(p)}((k+i\epsilon)xd)
e^{i\alpha xd}dx\\
=&A_{s} \sum_{q \in\mathbb{Z}\backslash\{0\}} q^{-s}
\int_{b}^{b+c}
\psi_{b,c}^{(j)}(x)f^{(p)}((k+i\epsilon)xd)
e^{i\alpha xd} e^{2\pi i q x} dx\\
=&-A_{s}\left[ \delta_{j,0} \sum_{q \in\mathbb{Z}\backslash\{0\}}
q^{-s} G_{\alpha,p,q}^{\epsilon}(b)+
 \sum_{q \in\mathbb{Z}\backslash\{0\}} q^{-s}
\int_{b}^{b+c} \psi_{b,c}^{(j+1)}(x) G_{\alpha,p,q}^{\epsilon}(x) dx\right]\\
=&-A_{s} \delta_{j,0} \sum_{q\in\mathbb{Z}\backslash\{0\}}
q^{-s} G_{\alpha,p,q}^{\epsilon}(b)-
\mathcal{R}_{\alpha,b,c,p,j}^{\epsilon} \, .
\end{aligned}
\end{equation}
Due to the uniform boundedness of $G_{\alpha,p,q}^{\epsilon}(x)$
on $[1,\infty)$ in the simultaneous limit
$\epsilon\rightarrow 0^+$, $|q|\rightarrow\infty$, we may commute the
$\epsilon\rightarrow 0^+$ limit with the sum in the first term:
\begin{equation}
\label{eq:curly_E_limit}
\limsup_{\epsilon\rightarrow 0^+}\left|
\mathcal{E}_{\alpha,b,c,p,j}^{\epsilon}
+A_{s} \delta_{j,0} \sum_{q \in\mathbb{Z}\backslash\{0\}}
q^{-s} G_{\alpha,p,q}(b) \right| = 
\limsup_{\epsilon\rightarrow 0^+}
\left|\mathcal{R}_{\alpha,b,c,p,j}^{\epsilon}\right| \, .
\end{equation}
The right-hand side is
$\mathcal{O}(c^{-j-N-1/2})$ as $c\rightarrow\infty$, for any $N$, as has
already been established.

\hfill $\square$

\section{Informal description of the algorithm}\label{secfast}

Using equations (\ref{eq:algorithm_formula}) and Lemma \ref{mixlemma},
we can conclude the existence of 
$\lim_{\epsilon \rightarrow 0^+} {S}^\epsilon (\alpha)$
and obtain an explicit formula for this quantity 
(for a fixed $s\geq 2$):
\begin{equation}
\label{eq:exact_sum_eq}
\begin{aligned}
\lim_{\epsilon \rightarrow 0^+}
{S}^{\epsilon}(\alpha)=&\sum_{n=1}^{b}{F}_{\alpha} (nd)
-\sum_{\gamma=1}^{s} \frac{(-1)^{\gamma}B_{\gamma}}{\gamma!} \left.
\frac{d^{\gamma-1}}{dx^{\gamma-1}}
\left({F}_{\alpha} (xd)\right)\right|_{x=b} \\ 
&+\frac{1}{d} \int_{0}^{\infty} {F}_{\alpha} 
\left(bd+ i \tau \textnormal{sgn} (k+\alpha) \right)
i \textnormal{sgn} (k+\alpha) d\tau \\
& - \frac{(-1)^{s-1}}{s!} A_s \sum_{j_1=0}^s \binom{s}{j_1} d^s
k^{s-j_1} (i\alpha)^{j_1} \left[ \sum_{q \in \mathbb{Z}\backslash
\left\{0\right\}} q^{-s} G_{\alpha,s-j_1,q} (b) \right] \, .
\end{aligned}
\end{equation}
While this formula is mathematically interesting, it does not provide
a convenient algorithm due to the presence of terms depending 
on $G_{\alpha,p,q} (b)$. Instead, our algorithm approximates this
limit by computing the first four terms of the formula
(\ref{eq:algorithm_formula}) at $\epsilon=0$, for larger and larger
values of $c$. Specifically, the expression is evaluated for increasing
$c$ at fixed $b$ until convergence is obtained with the desired number
of digits. This procedure is formally justified due to
Lemma \ref{convlemma} and Lemma \ref{mixlemma}.

It is, perhaps, of interest to note that
the conditional convergence of the lattice sums is discussed in the 
literature (see, for example \cite{Linton2010}), but not typically studied in
terms of an explicit limit with vanishing dissipation. An exception is
\cite{Dienstfrey2001}.

\subsection{Wood anomalies}

As noted in the introduction, when
$(k \pm \alpha)d = 2\pi n$, where $n \in \mathbb{Z}$,
the lattice sums considered in this paper may actually diverge.
Such singularities are known as {\em Wood's anomalies} and correspond
to a kind of physical resonance in the underlying system. 
Using (\ref{eq:exact_sum_eq}) and (\ref{eq:lim_G_eps}), 
we expect to see $|{S}(\alpha)| \lesssim |\delta k|^{-\frac{1}{2}}$ 
for sums involving $H_\ell$ and $|{S}(\alpha)| \lesssim
\log \frac{1}{|\delta k|}$ for sums involving $h_\ell$
as $|\delta k| \rightarrow 0$. Here, $|\delta k|$ is the distance 
(in the wavenumber variable $k$) to the closest Wood anomaly. 
Two dimensional Wood anomalies (see \eqref{wood2d}), are more complicated
to characterize and will be considered at a later date.

\section{Numerical Validation}\label{sec-numerical}

Our fast summation algorithm for one-dimensional lattice sums, as
previously indicated, is based upon the formula 
(\ref{eq:algorithm_formula}),
\begin{equation*}
\begin{aligned}
{S}^{\epsilon}(\alpha)= & \sum_{n>0}
{F}_\alpha^\epsilon (n d) = \sum_{n > 0}
f ( (k+i \epsilon) n d) e^{i \alpha n d} \\
=&\sum_{n=1}^{b-1}{F}_{\alpha}^{\epsilon}(nd)
+\sum_{n=b}^{b+c}{F}_{\alpha}^{\epsilon}(nd)\psi_{b,c}(n)
-\int_{b}^{b+c}{F}_{\alpha}^{\epsilon}(xd)\psi_{b,c}(x)\, dx\\
&+\frac{1}{d} \int_{0}^{\infty} {F}_{\alpha}^{\epsilon}
\left(bd+\frac{\epsilon+i(k+\alpha)}{\sqrt{\epsilon^2+(k+\alpha)^2}}\tau\right)
\frac{\epsilon+i(k+\alpha)}{\sqrt{\epsilon^2+(k+\alpha)^2}}\, d\tau
+R_{s,b,c}^{\epsilon}(\alpha) \, .
\end{aligned}
\end{equation*}
Using Lemma \ref{convlemma} and Lemma \ref{mixlemma} which have been
proven in the preceding sections, if $c$ is sufficiently large then
\begin{equation}
\label{eq:alg_expr}
\begin{aligned}
\lim_{\epsilon \rightarrow 0^+} {S}^\epsilon(\alpha) \approx
&\sum_{n=1}^{b-1}{F}_{\alpha} (nd)
+\sum_{n=b}^{b+c}{F}_{\alpha} (nd)\psi_{b,c}(n)
-\int_{b}^{b+c}{F}_{\alpha} (xd)\psi_{b,c}(x)\, dx\\
&+\frac{1}{d} \int_{0}^{\infty} {F}_{\alpha}
\left(bd+i \tau \textnormal{sgn} \left( k+\alpha \right)\right)
i \, \textnormal{sgn} \left( k+\alpha \right)
\, d\tau \, .
\end{aligned}
\end{equation}
Moreover, for any fixed cutoff function $\psi$ as 
in the statement of Theorem 3.1, the error associated with this
approximation vanishes more quickly than any power of $c$, as
$c\rightarrow \infty$.

The fast algorithm proceeds as follows: First, we fix (once and for all)
a smooth cut-off function $\psi$ as in the statement of Theorem 3.1.
We also fix a positive integer value for the parameter $b$, which does
not play an essential role in the analysis. Second, we evaluate the
right-hand side of (\ref{eq:alg_expr}) for integer values of $c$,
doubling $c$ at each iteration until convergence is obtained.

To test the speed and accuracy of our algorithm,
we consider the cases where
$f$ is the Hankel function of the first kind $H_\ell^{(1)}$ or
the spherical Hankel function of the first kind $h_\ell^{(1)}$. With
a Fortran implementation on a
1.3GHz Intel Core M processor, we found that the first
120 lattice sums $S_l$ in (\ref{Sndef}) were computed in
1.2 milliseconds with 
$7$ digits of accuracy for $k = 1$, with a unit cell of length 
$d = 1$ and $\alpha= 0.4$.
For $k = 10$, 1.5 milliseconds were required and for 
$k = 100$, 300 lattice sums were computed in 
2 milliseconds.
For validation, the sums ${S}^\epsilon(\alpha)$ were computed 
directly for $\epsilon > 0$, and the limit $\epsilon \rightarrow 0^+$ 
was determined numerically using fourth-order Richardson extrapolation
starting at $\epsilon = 10^{-3}$, with an estimated accuracy of twelve
digits.

In Fig. \ref{scanfig}, we scan a range of frequencies $k$ with the phase
$\alpha$ set to either $0$ or $0.4$. The blowup of
$S_0$, defined by (\ref{Sndef}) is clearly visible.
To investigate the behavior of our numerical method near Wood anomalies,
we set parameters to 
$d = 1$, $\alpha = \frac{\pi}{4}$, and $k = \frac{7 \pi}{4} + \eta$,
where $0 < \eta \leq 1$, and consider the lattice sums 
$S_l$ involving $H_\ell$ and $h_\ell$, respectively.
In Fig. \ref{woodfig}, we plot $S_l$ and $S^{grating}_{l,0}$, defined
as the one-dimensional grating sum component of (\ref{Snmdef}), 
for $l = 0,1,2,3$.
In the first case, there is a clear power-law singularity with
the correct exponent. In the latter case, involving $h_\ell$,
the blowup is only logarithmic and harder to fit precisely.

\begin{figure}[h]
\centering
\includegraphics[width=0.6\textwidth]{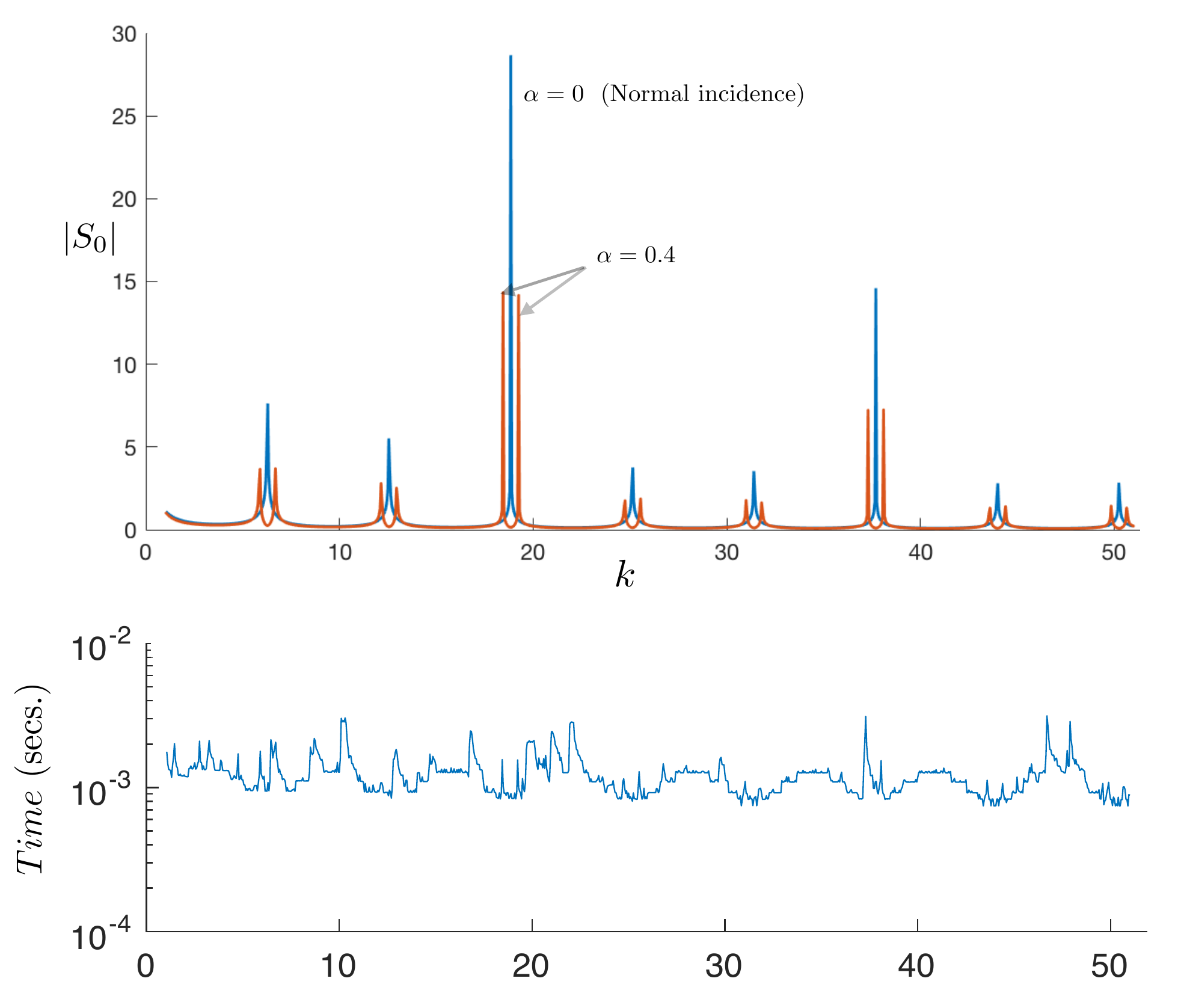}
\caption{(Top) The periodic blowup of the lattice sums
$S_0(\alpha)$, defined by (\ref{Sndef}) 
as a function of frequency $k$, with $\alpha$ set to either $0$ or $0.4$ 
and $d=1$.
(Bottom) Time required for lattice sums with $\alpha = 0.4$ at eack $k$.}
\label{scanfig}
\end{figure}

\begin{figure}[h]
\centering
\includegraphics[width=0.4\textwidth]{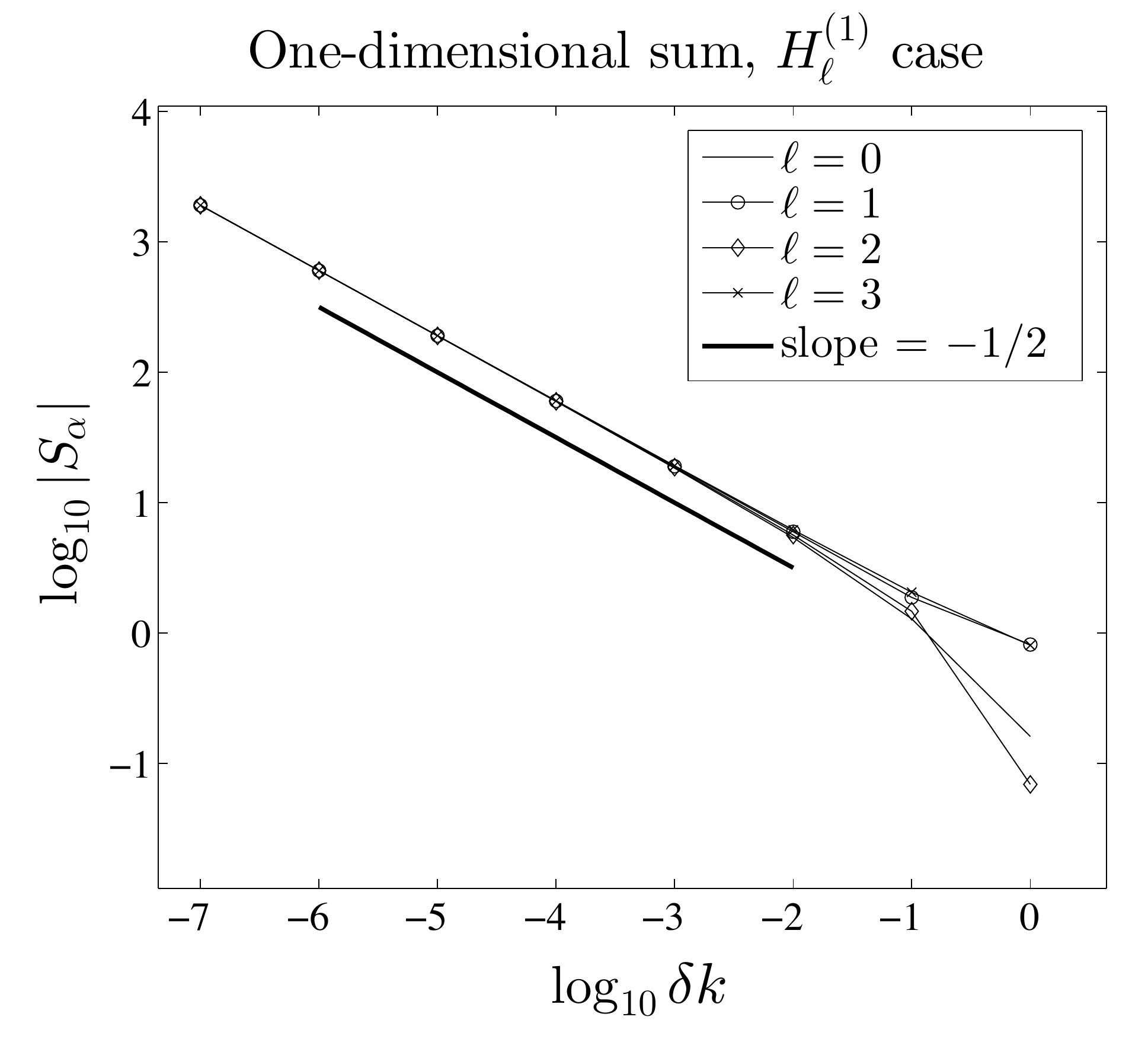}\ 
\includegraphics[width=0.43\textwidth]{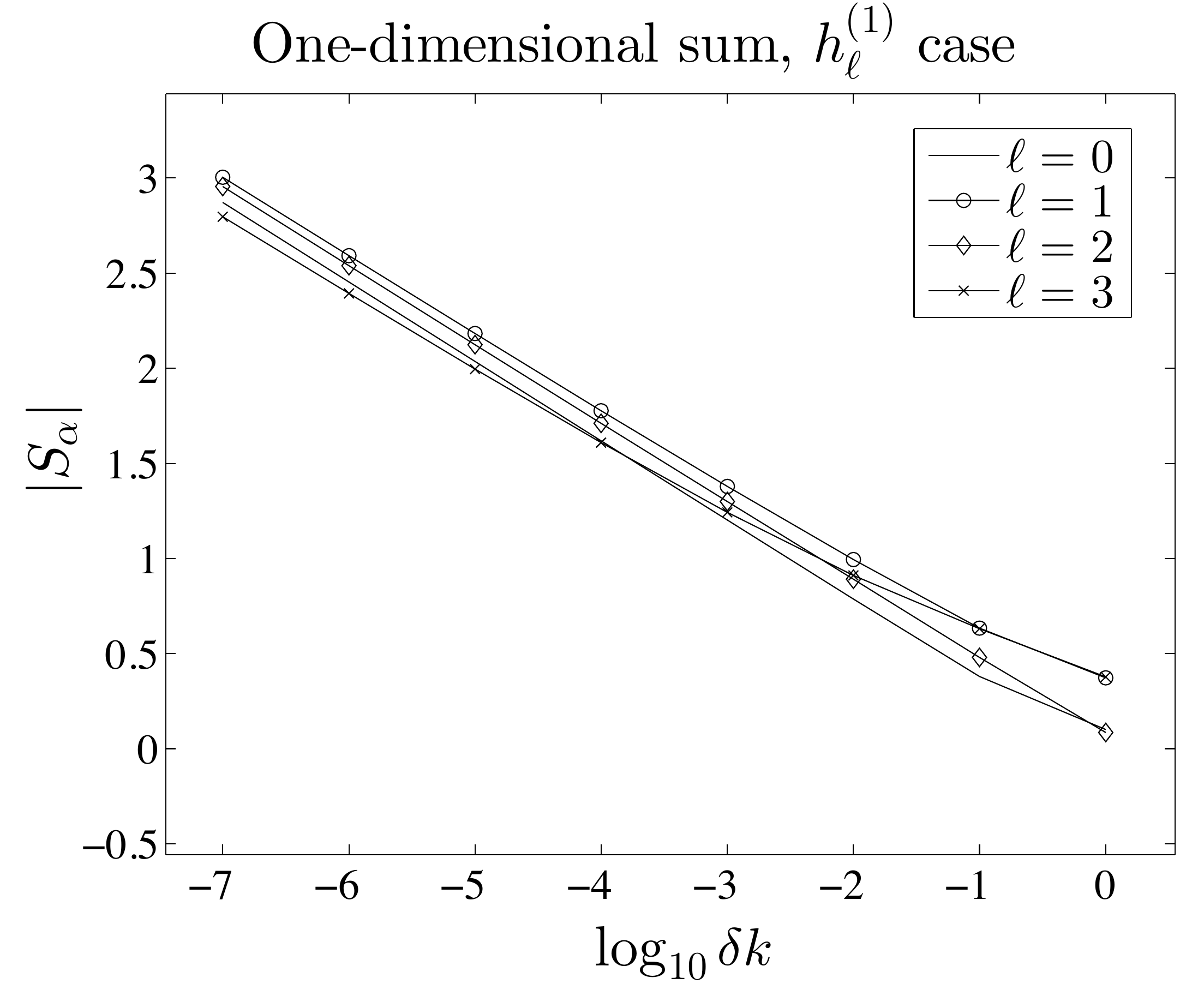}
\caption{(l) Power-law blow-up of $S_l(\alpha)$ near a Wood's anomaly
when $f=H^{(1)}_\ell$ with 
parameters $d=1$, $\alpha = \frac{\pi}{4}$, 
$k=\frac{7\pi}{4}+ \delta k$.
(r) Blow-up of the lattice sums $S^{grating}_{l,0}(\alpha)$ near 
a Wood's anomaly when $f=h^{(1)}_\ell$, with the same parameters.
Note that on the left, the $y$-axis is on a logarithmic
scale, while on the right, it is on a linear scale.\label{woodfig} }
\end{figure}

\section{Conclusions} \label{concl}

We have described a general approach for the numerical evaluation of 
one-dimensonal lattice sums which play an important role in 
diffraction and wave propagation problems in both
two and three dimensions. Indeed, it is often possible to efficiently reduce
higher-dimensional sums to their one-dimensional counterparts as 
discussed in section \ref{latticeredux}. Our 
algorithm achieves super-algebraic convergence rates and is able
to evaluate lattice sums accurately and efficiently.
Moreover, our estimates supply an interesting analytic 
interpretation of Wood's anomalies - physical resonances that occur
when $(k \pm \alpha)d = 2 \pi n $ for integer $n$ - which cause the lattice sums
to diverge \cite{LintonThompson2007,Linton2010}. 
In particular, the formulas (\ref{eq:lim_G_eps}) and
(\ref{eq:exact_sum_eq}) allow us to directly estimate the type of blow-up
one should expect to see. 

We believe that
higher dimensional Wood anomalies can be analyzed by
coupling our method with lattice reduction techniques, and we 
will report on such work at a later date.
Finally, it is worth noting that lattice sums can be avoided altogether.
Quasiperiodic boundary conditions can be imposed, for example,
using layer potentials 
\cite{BarnettGreengard2010,BarnettGreengard2011,Gillman2013}, fundamental
solutions \cite{Cho2014}, or spherical harmonics \cite{acper}
to enforce the conditions explicitly.
Which approach is more efficient will depend, we suspect, on the ambient
dimension, the frequency, and on the aspect ratio of the unit cell.
Both this question and a comparison with other fast algorithms, such as those
discussed in 
\cite{Kurkcu2009,Linton2010,LintonThompson2007,LintonThompson2009,
Maystre2012,McPhedranGrubits2000,Moroz2001,Moroz2006,
Twersky1961, YasumotoYoshitomi1999} remain to be explored.

\bibliography{lattice_paper_bib}{}
\bibliographystyle{plain}
\end{document}